\theoremstyle{plain}
\newtheorem{thm}{Theorem}[section]
\newtheorem{prop}[thm]{Proposition}
\newtheorem{lem}[thm]{Lemma}
\theoremstyle{definition}
\newtheorem{prop-defn}[thm]{Proposition--Definition}
\theoremstyle{remark}
\newtheorem{rem}[thm]{Remark}
\newtheorem{ex}[thm]{Example}
\newtheorem{con}[thm]{Construction}
\numberwithin{equation}{section}
\DeclareMathOperator{\Aut}{Aut}
\DeclareMathOperator{\Cl}{Cl}
\DeclareMathOperator{\Hom}{Hom}
\DeclareMathOperator{\cHom}{\mathcal{H}\mathnormal{om}}
\DeclareMathOperator{\Ext}{Ext}
\DeclareMathOperator{\Pic}{Pic}
\DeclareMathOperator{\End}{End}
\DeclareMathOperator{\GL}{GL}
\DeclareMathOperator{\rk}{rk}
\newcommand{\QED}{\ifhmode\unskip\nobreak\fi\quad {\rm Q.E.D.}} % QED
\newcommand{\bC}{\mathbb C}
\newcommand{\bP}{\mathbb P}
\newcommand{\bQ}{\mathbb Q}
\newcommand{\bR}{\mathbb R}
\newcommand{\bZ}{\mathbb Z}
\newcommand{\cB}{\mathcal B}
\newcommand{\cC}{\mathcal C}
\newcommand{\cX}{\mathcal X}
\newcommand{\cY}{\mathcal Y}
\newcommand{\cE}{\mathcal E}
\newcommand{\sH}{\mathcal H}
\newcommand{\cO}{\mathcal O}
\newcommand{\sL}{\mathcal L}
\newcommand{\ocM}{\overline{\mathcal M}}
\DeclareMathOperator{\cEnd}{\mathcal{E}\mathnormal{nd}}
\DeclareMathOperator{\spz}{sp}
\title{Exceptional bundles associated to degenerations of surfaces}
\date{\today}
\author{Paul Hacking}
\address{Department of Mathematics and Statistics, Lederle Graduate
Research Tower, University of Massachusetts, Amherst, MA 01003-9305}
\email{hacking@math.umass.edu}
\thanks{The author was partially supported by NSF grant DMS-0968824. I would like to thank T.~Bridgeland, I.~Dolgachev, D.~Huybrechts, A.~King, J.~Koll\'ar, J.~Tevelev, R.~Thomas, and G.~Urz\'ua for helpful discussions and correspondence.}
\begin{document}
\maketitle

%\begin{abstract}
%\end{abstract}

%\tableofcontents

\section{Introduction}

In 1981 J. Wahl described smoothings of surface quotient singularities with no vanishing cycles \cite[5.9.1]{W81}. Given a smoothing of a projective surface $X$ of this type, we construct an associated exceptional vector bundle on the nearby fiber $Y$ in the case $H^{2,0}(Y)=H^{1}(Y)=0$.
If $Y=\bP^2$ we show that our construction establishes a bijective correspondence between the possible degenerate surfaces $X$ and exceptional bundles on $Y$ modulo a natural equivalence relation. If $Y$ is of general type then our construction establishes a connection between components of the boundary of the moduli space of surfaces deformation equivalent to $Y$ and exceptional bundles on $Y$.

Let $n,a$ be positive integers such that $a<n$ and $(a,n)=1$.
Consider the cyclic quotient singularity
\begin{eqnarray}\label{wahlsing}
\renewcommand{\arraystretch}{1.5}
\begin{array}{c}
(0 \in \bC^2 /(\bZ/n^2\bZ)),\\
\bZ/n^2\bZ \ni 1 \colon (u,v) \mapsto (\xi u, \xi^{na-1}v), \quad \xi = \exp(2\pi i /n^2).
\end{array}
\end{eqnarray}
We refer to (\ref{wahlsing}) as a \emph{Wahl singularity} of type $\frac{1}{n^2}(1,na-1)$.
A Wahl singularity admits a \emph{$\bQ$-Gorenstein smoothing}, that is, a one parameter deformation such that the general fiber is smooth and the canonical divisor of the total space is $\bQ$-Cartier.
The Milnor fiber of such a smoothing is a rational homology ball. So, if $Y$ is the general fiber of a $\bQ$-Gorenstein smoothing of a surface $X$ with Wahl singularities, then the specialization map $$H_*(Y,\bQ) \rightarrow H_*(X,\bQ)$$ is an isomorphism. For this reason, it is difficult to predict the existence of the degeneration $Y \leadsto X$ given the surface $Y$.

An \emph{exceptional bundle} $F$ on a surface $Y$ is a holomorphic vector bundle such that $\Hom(F,F)=\bC$ and $\Ext^1(F,F)=\Ext^2(F,F)=0$.
In particular $F$ is indecomposable, rigid (no infinitesimal deformations), and unobstructed in families.
So, if $\cY/(0 \in S)$ is a deformation of $Y$ over a germ $(0 \in S)$, then $F$ deforms in a unique way to a family of exceptional bundles on the fibers of $\cY/(0 \in S)$.

\begin{thm} \label{mainthmintro}
Let $X$ be a projective normal surface with a unique singularity $P \in X$ of Wahl type $\frac{1}{n^2}(1,na-1)$.
Let $\cX/(0 \in T)$ be a one parameter deformation of $X$ such that the general fiber $Y$ is smooth and the canonical divisor $K_{\cX}$ of the total space is $\bQ$-Cartier.
\begin{enumerate}
\item Assume that $H_1(Y,\bZ)$ is finite of order coprime to $n$.
Then the specialization map
$$\spz \colon H_2(Y,\bZ) \rightarrow H_2(X,\bZ)$$
is injective with cokernel isomorphic to $\bZ/n\bZ$.
\item Assume in addition that $H^{2,0}(Y)=0$. Then, after a base change $T' \rightarrow T$ of degree $a$, there exists a reflexive sheaf $\cE$ on $\cX':=\cX \times_T T'$ such that
\begin{enumerate}
\item $F:=\cE|_Y$ is an exceptional bundle of rank $n$ on $Y$, and
\item $E:=\cE|_X$ is a torsion-free sheaf on $X$ such that its reflexive hull $E^{\vee\vee}$ is isomorphic to the direct sum of $n$ copies of a reflexive rank $1$ sheaf $A$, and the quotient $E^{\vee\vee}/E$ is a torsion sheaf supported at $P \in X$.
\end{enumerate}
If $\sH$ is a line bundle on $\cX/T$ which is ample on the fibers, then $F$ is slope stable with respect to the the ample line bundle $H:=\sH|_Y$.
Moreover, we have
$$c_1(F)=n c_1(A) \in H_2(Y,\bZ) \subset H_2(X,\bZ),$$
$$c_2(F)=\frac{n-1}{2n}(c_1(F)^2+n+1),$$
$$c_1(F) \cdot K_Y = \pm a \mod n,$$
and
$$H_2(X,\bZ)=H_2(Y,\bZ)+\bZ\cdot(c_1(F)/n).$$
\end{enumerate}
\end{thm}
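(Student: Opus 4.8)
The plan is to treat the two parts in sequence, with the topological computation of part~(1) feeding into the sheaf-theoretic construction of part~(2).

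For part~(1) I would argue purely topologically. Write $X^\circ = X \setminus \{P\}$, let $L$ denote the link of the singularity $P$, and let $M$ denote the Milnor fiber of the $\bQ$-Gorenstein smoothing, so that $Y$ is obtained from $X^\circ$ by gluing in $M$ along $L = \partial M$, while $X$ is obtained by coning off $L$. The relevant local inputs are that $L$ is a rational homology sphere with $H_1(L,\bZ) = \bZ/n^2$, that $M$ is a rational homology ball with $H_1(M,\bZ) = \bZ/n$, and that the natural map $H_1(L) \to H_1(M)$ is the surjection $\bZ/n^2 \to \bZ/n$. Comparing the Mayer--Vietoris sequences for $Y = X^\circ \cup M$ and $X = X^\circ \cup \Cone(L)$ via the collapsing map $M \to \Cone(L)$, and using the hypothesis that $H_1(Y,\bZ)$ is finite of order prime to $n$ to control the relevant torsion, I expect injectivity of $\spz$ and an identification of the cokernel with $\ker(H_1(L) \to H_1(M)) = \bZ/n$.

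The heart of the paper is the construction in part~(2). Near $P$ the $\bQ$-Gorenstein smoothing is the quotient by $\mu_n$ of the smooth threefold $\cW = \{xy = z^n + t\}$, where $\mu_n$ acts by $(x,y,z)\mapsto(\zeta x, \zeta^{-1} y, \zeta^a z)$ with $t$ invariant; the action is free away from the origin, with quotient singularity of type $\tfrac{1}{n^2}(1,na-1)$ on the central fiber. I would build $\cE$ by equivariant descent of a rank~$n$ sheaf on $\cW$: on the general fiber the natural equivariant sheaf descends to the pushforward of $\cO$ along the free \'etale $\mu_n$-cover, which is simple because the nontrivial characters of $\pi_1 = \mu_n$ have no invariant sections on the rational homology ball, whereas away from $P$ on $X$ the sheaf must be forced into the isotypic form $A^{\oplus n}$. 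Reconciling these two requirements is exactly what the degree~$a$ base change $T' \to T$ accomplishes: it trivialises the monodromy permuting the rank one eigensheaves so that they all become isomorphic to a single reflexive sheaf $A$. I would then glue the local model to $\cA^{\oplus n}$ on $X^\circ$ to obtain the global reflexive sheaf $\cE$ on $\cX'$, with $\cE|_{X^\circ} \cong (A|_{X^\circ})^{\oplus n}$ by construction, so that $E^{\vee\vee} = A^{\oplus n}$ and $E^{\vee\vee}/E$ is a torsion sheaf supported at $P$. This construction, and in particular the role of the base change, is the step I expect to be the main obstacle.

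It remains to verify that $F = \cE|_Y$ is exceptional and stable and to compute its invariants; here the strategy is to transfer everything to the special fiber. Since Wahl singularities are rational, $\chi(\cO_X) = \chi(\cO_Y) = 1$ and $p_g(X) = q(X) = 0$. To see $\Ext^2(F,F) = 0$, dualise to $\Hom(F, F\otimes K_Y)$ and apply upper semicontinuity in the flat family $\cE/T'$: on the special fiber any such map factors through $E^{\vee\vee}\otimes K_X = A^{\oplus n}\otimes K_X$, and $\Hom(A, A\otimes K_X) = H^0(\omega_X) = 0$, so the group vanishes on $X$ and hence on the nearby $Y$. For stability of $F$ with respect to $H$, note that $A^{\oplus n}$ is semistable of slope $\mu(A)$, so every subsheaf of $E$ has slope at most $\mu(A) = \mu(E)$; a subsheaf destabilising $F$ would specialise to such a subsheaf and force slope $\ge \mu(E)$, a contradiction, with the borderline case of equal slopes excluded by the coprimality encoded in $c_1(F)\cdot K_Y \equiv \pm a \bmod n$. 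Stability gives $\Hom(F,F) = \bC$, and then $\Ext^1(F,F) = 0$ follows from $\chi(F,F) = 1$. Finally $c_1(F) = n\,c_1(A)$ because $E \cong A^{\oplus n}$ generically; $c_2(F)$ is pinned down either by Riemann--Roch from $\chi(F,F) = 1$ (using $\chi(\cO_Y) = 1$) or by a local length computation of $E^{\vee\vee}/E$ at $P$; and the congruence $c_1(F)\cdot K_Y \equiv \pm a \bmod n$ together with $H_2(X) = H_2(Y) + \bZ\cdot(c_1(F)/n)$ follows from the explicit class of $A$ in $\Cl(X,P) = \bZ/n^2$ combined with part~(1).
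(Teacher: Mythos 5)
Your part (1) is essentially the paper's argument: the paper compares the Mayer--Vietoris sequences for $Y=X^o\cup M$ and $X=X^o\cup B$ exactly as you propose, except that it must first \emph{derive} the surjectivity of $H_2(X,\bZ)\rightarrow H_1(L,\bZ)$ from the hypothesis that $H_1(Y,\bZ)$ is finite of order coprime to $n$ (this is done by a snake-lemma computation in the proof of the theorem, after which Lemma~\ref{MV} applies); that surjectivity is not free, and you will need it again in part (2), since the existence of the global sheaf $A$ with the correct local class at $P$ (via the line bundle $\tilde{A}$ of Lemma~\ref{linebundle}) depends on it. Also, the cokernel of $\spz$ is naturally the image $H_1(M)\simeq\bZ/n\bZ$ of $H_1(L)\rightarrow H_1(M)$, not its kernel, though both are abstractly $\bZ/n\bZ$.

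The genuine gap is in part (2), and you correctly flagged where: your ``equivariant descent'' construction does not produce an exceptional bundle, and the step you defer is the entire content of the paper. The pushforward of $\cO$ along the free \'etale $\mu_n$-cover of the Milnor fiber decomposes as the direct sum of the $n$ eigen-line-bundles, so it is decomposable, not simple; no local statement about invariant sections can rescue this, since simplicity and rigidity of $F$ are global conditions that must be \emph{built in}. The paper instead makes the base change of degree $a$ (whose actual role is to make the weight of $t$ equal to $1$, so that the special fiber of the weighted blowup with weights $\frac{1}{n}(1,na-1,a,n)$ is \emph{reduced} --- not to kill any monodromy on eigensheaves), obtaining $\tilde{X}=\tilde{X}_1\cup W$ with $W=(XY=Z^n+T^a)\subset\bP(1,na-1,a,n)$; it then constructs, by a delicate induction on $a$ using mutations and a further degeneration of $W$ (Proposition~\ref{Wbdles}), an exceptional bundle $G$ of rank $n$ on $W$ with $G|_C\simeq\cO_C(1)^{\oplus n}$, glues it to $\tilde{A}^{\oplus n}$ to get $\tilde{E}$ with $H^i(\cEnd\tilde{E})=H^i(\cEnd G)$, and deforms the rigid bundle $\tilde{E}$ across the family. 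Your substitute verifications also fail on their own terms: the chain ``stability $\Rightarrow$ simple, then $\chi(\cEnd F)=1\Rightarrow\Ext^1(F,F)=0$'' is circular, because $\chi(\cEnd F)$ requires the exact value of $c_2(F)$, i.e.\ the exact length of $E^{\vee\vee}/E$ at $P$, which your construction leaves undetermined (in the paper the $c_2$ formula is a \emph{consequence} of exceptionality, Remark~\ref{RR}, not an input). And your stability argument via specialization to the semistable $A^{\oplus n}$ yields only semistability; the borderline case cannot be excluded by the congruence $c_1(F)\cdot K_Y\equiv\pm a\bmod n$, since that controls $K_Y$-degrees, not $H$-slopes of subsheaves. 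The paper gets strict stability by degenerating to $\tilde{X}$ and using that $G$ is slope \emph{stable} on $W$ (Proposition~\ref{Gstable}, via restriction to a nodal anticanonical curve and \cite{BK06}), so that the destabilizing quotient gives a strict inequality on the $W$-component.
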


\begin{rem}
The torsion-free sheaf $E$ on $X$ is a Gieseker semistable limit of the family of stable exceptional bundles $F$ on the fibers of $\cX'/T'$ over $T' \setminus \{0\}$. If $E$ is Gieseker stable, then it is uniquely determined by this property. See \cite[2.B.1]{HL97}.
\end{rem}

\begin{rem}
The exceptional bundles on $Y$ obtained from $F$ by dualizing or tensoring by a line bundle arise from the degeneration $\cX/(0 \in T)$ in the same way.
Indeed, the dual $\cE^{\vee}$ of $\cE$ satisfies the properties \ref{mainthmintro}(2).
Similarly, if $L$ is a line bundle on $Y$, then $L$ extends to a reflexive rank $1$ sheaf $\sL$ on $\cX'$, and the reflexive hull of the tensor product $\cE \otimes \sL$ satisfies the properties \ref{mainthmintro}(2).
\end{rem}

\begin{rem}\label{RR}
The formula for $c_2(F)$ is valid for any exceptional bundle and is given by the Riemann-Roch formula:
$$1=\chi(\cEnd F) = \rk(F)^2\chi(\cO_Y)+(\rk F-1)c_1(F)^2-2\rk(F)c_2(F).$$
\end{rem}

Recent work of Y.~Lee and J.~Park constructs new surfaces of general type with $H^{2,0}=H^1=0$ as $\bQ$-Gorenstein smoothings of rational surfaces with Wahl singularities, see e.g. \cite{LP07}. In these cases our construction produces examples of exceptional bundles on surfaces of general type. As far as I know these are the first such examples. In general little is known about moduli spaces of stable bundles on surfaces of general type unless the expected dimension is large.

\bigskip

\noindent
\emph{Notation}. We work over the complex numbers. In what follows we use the shorthand $\bC^d/\frac{1}{r}(a_1,\ldots,a_d)$ or just $\frac{1}{r}(a_1,\ldots,a_d)$ for the cyclic quotient
\begin{eqnarray*}
\renewcommand{\arraystretch}{1.5}
\begin{array}{c}
\bC^d / (\bZ/r\bZ),\\
\bZ/r\bZ \ni 1 \colon (x_1,\ldots,x_d) \mapsto (\zeta^{a_1}x_1,\ldots,\zeta^{a_d}x_d), \quad \zeta=\exp(2\pi i/r).
\end{array}
\end{eqnarray*}
Some background on reflexive sheaves and toric geometry is reviewed in \S\ref{background}.
\section{Wahl singularities}\label{Wahl}

%References(check): \cite[5.9.1]{Wahl81}, \cite[Propositions 5.7, 5.9]{LW86}
Let $(P \in X)$ denote the Wahl singularity $(0 \in \bC^2_{u,v}/\frac{1}{n^2}(1,na-1))$.
The canonical divisor $K_X$ is $\bQ$-Cartier of index $n$, that is, $nK_X$ is Cartier and $n \in \bZ_{>0}$ is minimal with this property.
Thus $K_X$ defines a cyclic covering
$$\pi \colon (Q \in Z) \rightarrow (P \in X)$$
of degree $n$, which is unramified over $X \setminus \{P\}$, such that $K_Z=\pi^*K_X$ is Cartier.
The covering $\pi$ is called the \emph{index one cover}.
Explicitly, we have
$$Z = \bC^2_{u,v}/\textstyle{\frac{1}{n}}(1,-1) = (xy=z^n) \subset \bC^3_{x,y,z}$$
where $x=u^n,y=v^n,z=uv$. Thus
$$X = (xy=z^n) \subset \left( \bC^3_{x,y,z}/\textstyle{\frac{1}{n}}(1,-1,a) \right).$$
A smoothing of $P \in X$ is given by
\begin{equation} \label{versalQGdef}
\cX=(xy=z^n+t) \subset \left(\bC^3_{x,y,z}/\textstyle{\frac{1}{n}}(1,-1,a)\right) \times \bC^1_t.
\end{equation}

The link $L$ of the singularity $P \in X$ is the lens space \mbox{$S^3/\frac{1}{n^2}(1,na-1)$}. Let $M$ denote the Milnor fiber of the smoothing (\ref{versalQGdef}), a smooth $4$-manifold with boundary $L$. Wahl observed that $M$ is a rational homology ball.
More precisely, $\pi_1(M)=\bZ/n\bZ$, $H_i(M,\bZ)=0$ for $i>1$, and the map $\pi_1(L) \rightarrow \pi_1(M)$ is the surjection $\bZ/n^2\bZ \rightarrow \bZ/n\bZ$.
To see this, note that by construction $M$ is the quotient of the Milnor fiber $M_Z$ of a smoothing of $Q \in Z$, a Du Val singularity of type $A_{n-1}$, by a free action of $\bZ/n\bZ$. The Milnor fiber $M_Z$ has the homotopy type of a bouquet of $n-1$ copies of $S^2$. So in particular $M_Z$ is simply connected and $\pi_1(M)=\bZ/n\bZ$. Since $M$ is Stein of complex dimension $2$ it has the homotopy type of a cell complex of real dimension $2$. Finally the Euler number $e(M)=e(M_Z)/n=1$, so $b_2(M)=0$.

\begin{rem}
A more explicit analysis yields the following topological description of $M$, see \cite[2.1]{K92}.
%(I would like to thank Dan Freed for pointing this out to me, and suggesting that the space $N_Z$ below should be referred to as a \emph{French pastry}.)
Let $N_Z$ be the topological space obtained from $n$ copies $\Delta_j$ of the closed disc $\Delta :=(|z| \le 1) \subset \bC$ by identifying their boundaries.
Define a free action of $\bZ/n\bZ$ on $N_Z$ by
$$\Delta_j \rightarrow \Delta_{j+1}, \quad z \mapsto \zeta z, \quad \zeta=\exp(2\pi i /n),$$
where the indices $j$ are understood modulo $n$.
Let $N$ denote the quotient $N_Z/(\bZ/n\bZ)$. Then $M$ is homotopy equivalent to $N$.
\end{rem}

The $\bQ$-Gorenstein deformations of a quotient singularity are by definition those deformations induced by an equivariant deformation of the index one cover. The deformation (\ref{versalQGdef}) is a versal $\bQ$-Gorenstein deformation, that is, every $\bQ$-Gorenstein deformation of $(P \in X)$ is obtained from (\ref{versalQGdef}) by pullback. A one parameter smoothing of a quotient singularity is $\bQ$-Gorenstein iff the canonical divisor of the total space is $\bQ$-Cartier \cite[3.4]{H04}.

The $\bQ$-Gorenstein condition is natural from the point of view of Mori theory and is used in the definition of the compactification $\ocM$ of the moduli space of surfaces of general type analogous to the Deligne--Mumford compactification of the moduli space of curves \cite{KSB88}. In particular, since the versal $\bQ$-Gorenstein deformation space of a Wahl singularity is a smooth curve germ, if $X$ is a normal projective surface such that $K_X$ is ample, $X$ has a unique singularity of Wahl type, and $H^2(T_X)=0$ (so that there are no local-to-global obstructions for deformations of $X$), then $\ocM$ is smooth near $[X]$ and locally trivial deformations of $X$ determine a codimension one component of the boundary of $\ocM$.

\section{Blowup construction}

\begin{prop}\label{blowup}
Let $n$ and $a$ be positive integers such that $a<n$ and $(a,n)=1$.
Let $(P \in \cX)/T$ be a one parameter $\bQ$-Gorenstein smoothing of a Wahl singularity $(P \in X) \simeq (0 \in  \bC^2_{u,v} / \frac{1}{n^2}(1,na-1) )$.
Then, after a base change $T' \rightarrow T$ of degree $a$, there exists a birational morphism $\pi \colon \tilde{\cX} \rightarrow \cX'$ satisfying the following properties.
\begin{enumerate}

\item The locus $W:=\pi^{-1}(P) \subset \tilde{\cX}$ is a normal surface isomorphic to the weighted projective hypersurface
$$(XY=Z^n+T^a) \subset \bP(1,na-1,a,n).$$
\item The morphism $\pi$ restricts to an isomorphism $\tilde{\cX}\setminus W \rightarrow \cX \setminus \{P\}$.
\item The special fiber $\tilde{X}:=\tilde{\cX}_0$ is reduced and is the union of two components $\tilde{X}_1$ and $\tilde{X}_2$ meeting along a smooth rational curve $C$, where $\tilde{X}_1$ is the strict transform of $X$ and $\tilde{X_2}=W$ is the exceptional divisor.
The curve $C \subset \tilde{X}_1$ is the exceptional curve of the restriction $p \colon \tilde{X}_1 \rightarrow X$ of $\pi$, and $C=(T=0) \subset W$.
\item Let $Q \in C \subset \tilde{X}$ denote the point with homogeneous coordinates $(0 \colon 1 \colon 0 \colon 0)$.
The reducible surface $\tilde{X}$ has normal crossing singularities $(xy=0) \subset \bC^3_{x,y,z}$ along $C \setminus \{Q\}$, an orbifold normal crossing singularity $(xy=0) \subset \bC^3_{x,y,z}/ \frac{1}{na-1}(1,-1,a^2)$ at $Q$, and is smooth elsewhere.
\end{enumerate}
Moreover, the birational morphism $p \colon \tilde{X}_1 \rightarrow X$ is the weighted blowup of $P \in X$ with weights $\frac{1}{n^2}(1,na-1)$ with respect to the orbifold coordinates $u,v$.
\end{prop}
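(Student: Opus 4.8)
The plan is to realise $\pi$ as a single toric weighted blowup of the ambient space and to deduce all four properties, together with the ``moreover'', from the combinatorics of the subdivision and a chart-by-chart analysis of the hypersurface $\cX'$. First I would perform the base change $t=s^a$, so that
$$\cX'=(xy=z^n+s^a)\subset \cA':=\bC^4_{x,y,z,s}/\textstyle\frac{1}{n}(1,-1,a,0),$$
and record that $\cA'$ is the affine toric fourfold with lattice $N=\bZ^4+\bZ\cdot\frac{1}{n}(1,-1,a,0)$ and cone the positive orthant $\sigma=\langle e_1,e_2,e_3,e_4\rangle$, the rays $e_i$ corresponding to the coordinate divisors $(x=0),\dots,(s=0)$. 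The reason to take the base change of degree exactly $a$ is that it makes the deformation term homogeneous of the same weight as $xy$ and $z^n$ for the valuation introduced below, and it makes the $s$-direction primitive. I would then define $\pi$ to be the star subdivision of $\sigma$ at the primitive vector
$$w=\textstyle\frac{1}{n}(1,na-1,a,n)=\frac{1}{n}(1,-1,a,0)+(0,a,0,1)\in N.$$

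For the weight vector $w$ all three monomials $xy$, $z^n$, $s^a$ have the common $w$-value $a$, so the equation is $w$-homogeneous and the proper transform of $\cX'$ meets the exceptional toric divisor $D_w$ in its leading form. Identifying $D_w$ with $\bP(1,na-1,a,n)$ — the weights being the coordinates of $nw$, with $\gcd=1$ — gives property (1), $W=(XY=Z^n+T^a)$, where the homogeneous coordinate $T$ is the one attached to $s$. Property (2) is the general fact that a star subdivision is an isomorphism away from the star of $\sigma$, restricted to $\cX'$; freeness of the $\frac{1}{n}(1,-1,a,0)$-action away from the origin confirms that $\cX'\setminus\{P\}$ is smooth, so there are no further identifications. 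For the special fiber I would view the family as the map $s\colon\cX'\to T'$ and compute $\ord_{D_w}(s)=\langle w,f_4\rangle=1$, where $f_4$ is the character $s$; because the naive weight $n$ of $s$ is divided by $n$ in the finer lattice $N$, the total transform of $\{s=0\}$ is \emph{reduced}, equal to $\tilde X_1+W$ with $\tilde X_1$ the strict transform of $X$. Identifying $C=\tilde X_1\cap W$ with $(T=0)\subset W$ and checking it is a smooth rational curve is then a direct toric computation, giving property (3).

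Property (4) is carried out in the affine charts of the subdivision, one for each pair $\{e_i,w\}$: in each chart I would write the proper transform of $\cX'$ explicitly and read off the singularities of the special fiber. The chart attached to the weight $na-1$ (the $Y$-coordinate) produces the orbifold point $Q=(0:1:0:0)$ and, after identifying the relevant sublattice quotient, the type $\frac{1}{na-1}(1,-1,a^2)$; the remaining charts exhibit the ordinary normal crossing $(xy=0)$ along $C\setminus\{Q\}$ and smoothness elsewhere. The ``moreover'' then follows by restricting the whole computation to $s=0$, where the star subdivision reduces to the weighted blowup of $\bC^2_{u,v}$ with weights $\frac{1}{n^2}(1,na-1)$. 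I expect the main obstacle to be this last property: since $w$ is only congruent to the group vector $(1,-1,a,0)$ modulo $n$, the charts carry nontrivial finite stabilizers whose orders and actions must be tracked carefully through $N$ to obtain precisely $\frac{1}{na-1}(1,-1,a^2)$ rather than an isogenous quotient — the same lattice bookkeeping that is responsible for the reducedness of the central fiber.
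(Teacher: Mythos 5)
Your proposal is correct and is essentially the paper's own proof: the paper likewise makes the degree-$a$ base change $t \mapsto t^a$ and blows up the ambient space $\frac{1}{n}(1,-1,a) \times \bC^1_t$ with weights $w=\frac{1}{n}(1,na-1,a,n)$ (which, by the paper's definition of weighted blowup in \S\ref{wtdblowup}, is exactly your star subdivision at $w$), and it uses the same two key observations --- the $w$-homogeneity of $xy-z^n-t^a$ for property (1) and the weight of $t$ being $1$ for reducedness in (3) --- while leaving the chart verification of (4) to the reader just as you do. The only difference is the paper's one-line reduction of an arbitrary one-parameter $\bQ$-Gorenstein smoothing to the versal family $(xy=z^n+t)$ by pullback, a step you assume implicitly by writing $\cX'$ in versal form.
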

\begin{proof}
Recall from \S\ref{Wahl} that the versal $\bQ$-Gorenstein deformation of $(P \in X)$ is given by
$$\cX =(xy=z^n+t) \subset \left( \bC^3_{x,y,z} / \textstyle{\frac{1}{n}}(1,-1,a) \right) \times \bC^1_t.$$
We describe the construction of $\pi$ for the versal deformation. In general we obtain the morphism by pullback from the versal case.

We make the base change $t \mapsto t^a$ and blowup $(x,y,z,t)$ with weights $w=\frac{1}{n}(1,na-1,a,n)$ to obtain the desired birational morphism.
(See \S\ref{wtdblowup} for background on weighted blowups.)
Let $f \colon \tilde{A} \rightarrow A$ denote the blowup of the ambient space $A:=\frac{1}{n}(1,-1,a) \times \bC^1_t$.
Then $f$ has exceptional divisor $E=\bP(1,na-1,a,n)$ with weighted homogeneous coordinates $X,Y,Z,T$
corresponding to the orbifold coordinates $x,y,z,t$ at $0 \in A$. The $3$-fold $\tilde{\cX}$ is by definition the strict transform of $\cX \subset A$ under the map $f$. Observe that the equation $(xy=z^n+t^a) \subset A$ of $\cX$ is homogeneous with respect to the weight vector $w$. It follows that the exceptional locus
$$W:=\pi^{-1}(P) = E \cap \tilde{\cX}$$
of $\pi$ is given by the same equation in the weighted projective space $E$.
Now consider the fiber $\tilde{X}=\tilde{\cX}_0$ of $\tilde{\cX}$ over $0 \in T=\bC^1_t$.
Observe that the weight of $t$ is equal to $1$ (we made the base change above to ensure this).
It follows that the Cartier divisor $\tilde{X}=(t=0) \subset \tilde{\cX}$ is reduced, equal to the sum $X'+W$ of the strict transform of $X$ and the exceptional divisor $W$. It is easy to check in the charts for $\tilde{A}$ that the singularities of $\tilde{X}$ are as described in the statement.

Finally, since $X=(t=0) \subset A$ is identified with $\bC^2_{u,v}/\frac{1}{n^2}(1,na-1)$ by $(u,v) \mapsto (x,y,z)=(u^n,v^n,uv)$, we see that the induced birational morphism $p \colon \tilde{X}_1 \rightarrow X$ is the blowup with weights $\frac{1}{n^2}(1,na-1)$ with respect to $u,v$.
We remark that, since $P \in X$ is a cyclic quotient singularity, its minimal resolution $\hat{X} \rightarrow X$ has exceptional locus a nodal chain
$F=F_1+\ldots+F_r$ of smooth rational curves, such that the strict transforms $(u=0)'$ and $(v=0)'$ of the coordinate axes intersect the end components $F_1$ and $F_r$ respectively. Then $\tilde{X}_1$ is obtained from $\hat{X}$ by contracting the chain $F_1+\cdots+F_{r-1}$ of exceptional curves disjoint from $(v=0)'$ to a cyclic quotient singularity of type $\frac{1}{na-1}(a^2,-1)$.
\end{proof}

\section{Glueing}

Let $\cX/(0 \in T)$ be a one parameter deformation of a projective normal surface $X$ with quotient singularities.
Let $P \in X$ be a Wahl singularity of type $\frac{1}{n^2}(1,na-1)$ such that the germ $(P \in \cX)/T$ is a $\bQ$-Gorenstein smoothing of $(P \in X)$.
Let $P_1=P,P_2,\ldots,P_r$ be the singularities of $X$ and $L_i$ the link of the singularity $P_i \in X$.
Let $Y$ denote a general fiber of $\cX/T$. In this section we make the following assumptions:

\begin{enumerate}
\item The map
\begin{equation}\label{classgplocalglobal}
H_2(X,\bZ) \rightarrow \bigoplus H_1(L_i,\bZ), \quad \alpha \mapsto (\alpha \cap L_i)
\end{equation}
is surjective.
\item We have $H^{2,0}(Y)=0$ and $H^1(Y,\bZ)=0$.
\end{enumerate}

Let $(0 \in T') \rightarrow (0 \in T)$ and $\pi \colon \tilde{\cX} \rightarrow \cX'$ be the base change and blowup of Proposition~\ref{blowup}.

\begin{lem}\label{linebundle}
There exists a line bundle $\tilde{A}$ on the strict transform $\tilde{X}_1 \subset \tilde{\cX}$ of $X$ such that the restriction of $\tilde{A}$ to the exceptional curve $C$ of $p \colon \tilde{X}_1 \rightarrow X$ has degree $1$.
\end{lem}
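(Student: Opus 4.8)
The plan is to build $\tilde A$ by realizing one prescribed divisor class, and to locate the only genuine difficulty in the integral, global realization of a local class at $P$, where the standing assumptions enter. Write $\mu\colon \hat X \to \tilde X_1$ for the minimal resolution of the unique singular point $Q$ of $\tilde X_1$. By the last paragraph of the proof of Proposition~\ref{blowup}, $\hat X$ is the minimal resolution of $P\in X$, its exceptional locus over $P$ is a chain $F_1+\cdots+F_r$ of smooth rational curves, $\mu$ contracts $F_1+\cdots+F_{r-1}$ to $Q$, and $\mu$ maps $F_r$ isomorphically onto $C$. Since $Q$ is a rational singularity, a line bundle $L$ on $\hat X$ with $L\cdot F_i=0$ for $1\le i\le r-1$ descends to a line bundle $\tilde A=\mu_* L$ on $\tilde X_1$, and then $\tilde A\cdot C=\mu^*\tilde A\cdot F_r=L\cdot F_r$ by the projection formula. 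Thus it is enough to find $L\in\Pic(\hat X)$ with intersection vector $(L\cdot F_1,\dots,L\cdot F_r)=(0,\dots,0,1)=:e_r$.

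The germ $(v=0)'$, the strict transform of the coordinate axis $v=0$, already has these intersection numbers, since it meets $F_r$ transversally once and is disjoint from $F_1,\dots,F_{r-1}$; the only issue is that $v=0$ is a local curve at $P$. Let $M=(F_i\cdot F_j)$ be the (negative definite) intersection matrix of the chain. The image of $L\mapsto(L\cdot F_i)$ contains $M\bZ^r$, and $\bZ^r/M\bZ^r$ is canonically the local class group $\Cl_{\mathrm{loc}}(P)=H_1(L_1,\bZ)=\bZ/n^2\bZ$, under which $e_r$ maps to the class $[(v=0)]$. Consequently, given any Weil divisor $G$ on $X$ whose local class at $P$ equals $[(v=0)]$, its strict transform $\hat G$ satisfies $(\hat G\cdot F_i)\equiv e_r \pmod{M\bZ^r}$, so $(\hat G\cdot F_\bullet)-e_r=M\gamma$ for a unique $\gamma\in\bZ^r$, and $L:=\cO_{\hat X}(\hat G-\sum_i\gamma_i F_i)$ has intersection vector exactly $e_r$. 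The problem is therefore reduced to producing a global Weil divisor on $X$ with prescribed local class $[(v=0)]$ at $P$.

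This global realization is the heart of the matter and the only place the standing hypotheses are used. By assumption~(1) the map $H_2(X,\bZ)\to\bigoplus_i H_1(L_i,\bZ)$ is surjective, so there is a topological class $\alpha\in H_2(X,\bZ)$ with $\alpha\cap L_1=[(v=0)]$. To promote $\alpha$ to an algebraic divisor I would use assumption~(2): since $Y$ is smooth projective with $H^{2,0}(Y)=0$ and $H^1(Y,\bZ)=0$, the Lefschetz $(1,1)$ theorem gives $\Pic(Y)=H^2(Y,\bZ)$, so $H^2(Y,\bZ)$ — and, via the specialization comparison of Theorem~\ref{mainthmintro}(1), the relevant part of $H_2(X,\bZ)$ — is spanned by divisor classes; I would then represent $\alpha$ by a genuine Weil divisor $G$ on $X$ with $[G]_P=[(v=0)]$. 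Feeding $G$ into the construction above and descending through $\mu$ yields $\tilde A$ with $\tilde A\cdot C=1$. I expect the delicate step to be precisely this passage from the topological surjectivity of~(1) to an integral algebraic divisor inducing the exact local class, since over $\bQ$ the vector $e_r$ is already attained by the combination $\sum_i(M^{-1}e_r)_iF_i$ of exceptional curves, so the entire content lies in the integral (Cartier) refinement governed by $\Cl_{\mathrm{loc}}(P)=\bZ/n^2\bZ$.
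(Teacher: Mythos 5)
Your reduction is attractive and genuinely different from the paper's argument: you descend through the minimal resolution $\hat{X}\to\tilde{X}_1$, use that numerical triviality on the contracted chain $F_1+\cdots+F_{r-1}$ implies descent over the rational singularity $Q$, and translate the problem via the lattice identification $\bZ^r/M\bZ^r\simeq H_1(L_1,\bZ)\simeq\bZ/n^2\bZ$ into producing a global Weil divisor $G$ on $X$ whose local class at $P$ is $[(v=0)]$. All of that is correct. (The paper instead works directly on $\tilde{X}_1$: Du Bois \cite{DB81} gives $H^i(\cO_{\tilde{X}_1})=0$ for $i>0$, so $c_1\colon\Pic\tilde{X}_1\to H^2(\tilde{X}_1,\bZ)$ is an isomorphism, and a Mayer--Vietoris argument shows $H^2(\tilde{X}_1,\bZ)\to H^2(C,\bZ)$ is surjective.)

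However, your final step --- promoting the topological class $\alpha$ to an algebraic divisor via Lefschetz $(1,1)$ on $Y$ and the specialization map --- cannot work, and this is a genuine gap. The class you need pairs with $L_1$ to give $[(v=0)]$, which is a \emph{generator} of $H_1(L_1,\bZ)\simeq\bZ/n^2\bZ$ (indeed $(v=0)\sim(na-1)(u=0)$ locally and $(na-1,n^2)=1$). But by the very computation in Lemma~\ref{MV} (equivalently Theorem~\ref{mainthmintro}(1)), the image of $\spz\colon H_2(Y,\bZ)\to H_2(X,\bZ)$ pairs with $L_1$ inside the index-$n$ subgroup $I_Y\subset I_X=H_1(L_1,\bZ)$, since $I_X/I_Y\simeq H_1(M,\bZ)\simeq\bZ/n\bZ$. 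So no class coming from $\Pic Y$ by specialization has the required local invariant; the classes you need are precisely those in $H_2(X,\bZ)$ not in the image of $\spz$. The correct replacement is the fact recorded in \S\ref{reflexive} (see \cite{K05}, Proposition~38): since $X$ is a normal projective surface with quotient singularities and $H^1(\cO_X)=H^2(\cO_X)=0$ --- which follows from $H^{2,0}(Y)=0$ and $H^1(Y,\bZ)=0$ via \cite{DB81}, exactly as in the paper's proof --- one has $\Cl X=H_2(X,\bZ)$. With this, assumption~(1) alone hands you the Weil divisor $G$ with local class $[(v=0)]$, and the rest of your argument closes without any appeal to $Y$.
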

\begin{proof}
By our assumptions $H^i(\cO_Y)=0$ for $i>0$. Since $X$ has quotient singularities, it follows that $H^i(\cO_{\tilde{X}_1})=H^i(\cO_X)=0$ for $i>0$ \cite[4.6,5.3]{DB81}.
Hence $c_1 \colon \Pic \tilde{X}_1 \rightarrow H^2(\tilde{X}_1,\bZ)$ is an isomorphism, and we must show that the restriction map $H^2(\tilde{X}_1,\bZ) \rightarrow H^2(C,\bZ)$ is surjective.

Let $B_i$ denote the intersection of $X$ with a small ball about $P_i$ in some embedding.
Then $B_i$ is topologically the cone over the link $L_i$ of $P_i \in X$, in particular $B_i$ is contractible.
Write $X^o = X \setminus \bigcup B_i$. Define $N:= p^{-1} B_1 \subset \tilde{X}_1$. Then $C \subset N$ is a deformation retract.
The Mayer-Vietoris sequence for $\tilde{X}_1=X^o \cup (N \cup B_2 \cup \cdots \cup B_r)$ gives an exact sequence
$$H^2(\tilde{X}_1,\bZ) \rightarrow H^2(X^o,\bZ) \oplus H^2(C,\bZ) \rightarrow \bigoplus H^2(L_i,\bZ)$$
The restriction map $H^2(X^o,\bZ) \rightarrow \bigoplus H^2(L_i,\bZ)$ is identified with the map (\ref{classgplocalglobal}) by Poincar\'e duality (cf. \S\ref{reflexive}),
and (\ref{classgplocalglobal}) is surjective by assumption. It follows that the restriction map $H^2(\tilde{X}_1,\bZ) \rightarrow H^2(C,\bZ)$ is surjective as required.
\end{proof}

\begin{prop}\label{glueing}
Suppose $G$ is an exceptional bundle of rank $n$ on the $\pi$-exceptional divisor $W$ such that $G|_C \simeq \cO_C(1)^{\oplus n}$.
Let $\tilde{E}$ be the vector bundle on the reducible surface $\tilde{X}$ obtained by glueing $\tilde{A}^{\oplus n}$ on $\tilde{X}_1$ and $G$ on $\tilde{X}_2=W$ along $\cO_C(1)^{\oplus n}$ on $C$. Then $\tilde{E}$ is an exceptional vector bundle on $\tilde{X}$.

Let $\tilde{\cE}$ denote the vector bundle on $\tilde{\cX}$ obtained by deforming $\tilde{E}$.
Let $\cE:=(\pi_*\tilde{\cE})^{\vee\vee}$ be the reflexive hull of the pushforward of $\tilde{\cE}$ to $\cX'$.
Then $\cE|_{\cX'_t}$ is an exceptional vector bundle on $\cX'_t$ for $t \neq 0$ and $E:=\cE|_X$ is a torsion-free sheaf on $X$.

Let $A:=(p_*\tilde{A})^{\vee\vee}$ be the reflexive hull of the pushforward of $\tilde{A}$ to $X$.
Then the reflexive hull $E^{\vee\vee}$ of $E$ equals $A^{\oplus n}$ and $E^{\vee\vee}/E$ is a torsion sheaf supported at $P \in X$.
\end{prop}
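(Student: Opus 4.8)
The plan is to establish the three assertions in turn: first that the glued bundle $\tilde E$ is exceptional on the reducible central fibre $\tilde X$; second that it deforms uniquely over the total space $\tilde\cX$ and, after pushing forward and taking reflexive hull, produces $\cE$ with exceptional restriction to the general fibre; and third that $E=\cE|_X$ is torsion-free with reflexive hull $A^{\oplus n}$ and torsion quotient supported at $P$. For the first assertion, since $\tilde E$ is locally free the sheaves $\cExt^i(\tilde E,\tilde E)$ vanish for $i>0$, so $\Ext^i_{\tilde X}(\tilde E,\tilde E)=H^i(\tilde X,\cEnd\tilde E)$. Writing $\tilde E_1=\tilde A^{\oplus n}$, $\tilde E_2=G$ and $\tilde E_C=\cO_C(1)^{\oplus n}$, I would tensor the conductor sequence $0\to\cO_{\tilde X}\to\cO_{\tilde X_1}\oplus\cO_W\to\cO_C\to 0$ for $\tilde X=\tilde X_1\cup_C W$ with the locally free sheaf $\cEnd\tilde E$ to obtain
$$0\to\cEnd\tilde E\to\cEnd\tilde E_1\oplus\cEnd\tilde E_2\to\cEnd\tilde E_C\to 0,$$
where $\cEnd\tilde E_1=\cO_{\tilde X_1}^{\oplus n^2}$ and $\cEnd\tilde E_C=\cO_C^{\oplus n^2}$. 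The cohomological inputs are $H^{>0}(\cO_{\tilde X_1})=0$ with $H^0(\cO_{\tilde X_1})=\bC$ (as in the proof of Lemma~\ref{linebundle}), $H^0(\cEnd G)=\bC$ with $H^{>0}(\cEnd G)=0$ since $G$ is exceptional, and $H^0(\cO_C)=\bC$ with $H^{>0}(\cO_C)=0$ since $C\simeq\bP^1$. The only nontrivial map in the resulting long exact sequence is
$$H^0(\cEnd\tilde E_1)\oplus H^0(\cEnd\tilde E_2)=M_n(\bC)\oplus\bC\longrightarrow M_n(\bC)=H^0(\cEnd\tilde E_C),$$
which under the trivialisations sends $(B,\lambda)\mapsto B-\lambda I$; it is surjective with one-dimensional kernel (the scalars). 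Hence $\Hom(\tilde E,\tilde E)=\bC$, while surjectivity kills the connecting map and the remaining terms $H^{>0}(\cEnd\tilde E_i)$ and $H^{>0}(\cEnd\tilde E_C)$ all vanish, giving $\Ext^1(\tilde E,\tilde E)=\Ext^2(\tilde E,\tilde E)=0$. Thus $\tilde E$ is exceptional.

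For the second assertion, since $\Ext^1(\tilde E,\tilde E)=\Ext^2(\tilde E,\tilde E)=0$ the sheaf $\tilde E$ on $\tilde X=\tilde\cX_0$ is rigid and unobstructed, so it admits a unique formal deformation over $\tilde\cX/T'$ which algebraises (Grothendieck existence, $\tilde\cX/T'$ being projective) to a locally free sheaf $\tilde\cE$ on $\tilde\cX$ with $\tilde\cE|_{\tilde X}=\tilde E$. For $t\neq 0$ the morphism $\pi$ is an isomorphism over $\cX'_t$, so $\cE|_{\cX'_t}=\tilde\cE|_{\tilde\cX_t}$; by semicontinuity of $\Ext^i$ in the flat family and constancy of $\chi(\cEnd)$, the vanishing of $\Ext^{1}$ and $\Ext^2$ on the central fibre forces the same on nearby fibres, so $\cE|_{\cX'_t}$ is exceptional. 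Moreover $\pi_*\tilde\cE$ is already locally free away from $P$, so forming the reflexive hull and restricting to $X=\cX'_0$ only alters the sheaf over $P$.

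For the third assertion, $\cE$ is reflexive on the normal threefold $\cX'$, hence satisfies $S_2$; as $X=(t=0)$ is a Cartier divisor cut by a non-zerodivisor on $\cE$, the restriction $E=\cE|_X=\cE/t\cE$ satisfies $S_1$, that is, $E$ is torsion-free on the integral surface $X$. Away from $P$, where $\pi$ is an isomorphism and $\tilde X_1\setminus C\simeq X\setminus\{P\}$ carries $\tilde E|_{\tilde X_1}=\tilde A^{\oplus n}$, we obtain $E|_{X\setminus\{P\}}\simeq A^{\oplus n}|_{X\setminus\{P\}}$. Since $E^{\vee\vee}$ and $A^{\oplus n}$ are reflexive sheaves on the normal surface $X$ agreeing outside the codimension-two point $P$, they are isomorphic, and $E^{\vee\vee}/E$ is therefore a torsion sheaf supported at $P$.

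The main obstacle is the first assertion: verifying that $\tilde E$ is exceptional on the reducible, orbifold-singular surface $\tilde X$. The delicate points are justifying $\Ext^i_{\tilde X}(\tilde E,\tilde E)=H^i(\cEnd\tilde E)$ at the orbifold normal-crossing point $Q$ (working in the orbifold sense, where the local $\cExt$ still vanish for a locally free sheaf), setting up the gluing sequence for $\cEnd$ correctly, and identifying the endomorphism restriction map as $(B,\lambda)\mapsto B-\lambda I$. This last step is what simultaneously forces $\Hom(\tilde E,\tilde E)$ down to the scalars and annihilates $\Ext^1$, and it uses crucially both that $\tilde A^{\oplus n}$ has full matrix endomorphism algebra and that $G$ is simple with $G|_C\simeq\cO_C(1)^{\oplus n}$.
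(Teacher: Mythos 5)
Your proposal is correct and follows essentially the same route as the paper: the conductor sequence for $\tilde{X}=\tilde{X}_1\cup_C\tilde{X}_2$ tensored with $\cEnd\tilde{E}$, the vanishings $H^{>0}(\cO_{\tilde{X}_1})=H^{>0}(\cO_C)=0$, rigidity and unobstructedness to deform $\tilde{E}$ to $\tilde{\cE}$ with exceptional restrictions by semicontinuity, and the argument that $\cE$ reflexive plus $X=(t=0)$ Cartier gives $E$ satisfying $S_1$, with $E^{\vee\vee}=A^{\oplus n}$ from agreement away from $P$. The only cosmetic difference is that you make the $H^0$-level restriction map explicit as $(B,\lambda)\mapsto B-\lambda I$ using simplicity of $G$, whereas the paper notes that the constant-matrix part maps isomorphically onto $H^0(\cO_C)^{n\times n}$ and concludes $H^i(\cEnd\tilde{E})=H^i(\cEnd G)$ directly; these are the same computation.
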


\begin{rem}
We show in Proposition~\ref{Wbdles} that such bundles $G$ exist by induction on $a$. The induction step uses Proposition~\ref{glueing}.
\end{rem}

\begin{proof}
Since $\tilde{X}=\tilde{X}_1 \cup \tilde{X}_2$ has orbifold normal crossing singularities, we have an exact sequence of sheaves on $\tilde{X}$
$$0 \rightarrow \cO_{\tilde{X}} \rightarrow \cO_{\tilde{X}_1} \oplus \cO_{\tilde{X}_2} \rightarrow \cO_C \rightarrow 0.$$
The sheaf $\tilde{E}$ on $\tilde{X}$ is defined as the cokernel of the map
$$\tilde{A}^{\oplus n} \oplus G \rightarrow \cO_C(1)^{\oplus n}$$
where we fix isomorphisms $\tilde{A}^{\oplus n}|_C \simeq \cO_C(1)^{\oplus n} \simeq G|_C$.
(Note that the isomorphism type of $\tilde{E}$ does not depend on the choice of these isomorphisms because
$\Aut(\tilde{A}^{\oplus n}) = \Aut(\cO_C(1)^{\oplus n})=\GL(n,\bC)$.)
Then $\tilde{E}$ is a vector bundle on $\tilde{X}$ such that $\tilde{E}|_{\tilde{X}_1} \simeq \tilde{A}^{\oplus n}$ and $\tilde{E}|_{\tilde{X}_2} \simeq G$.

We show that $\tilde{E}$ is exceptional. Consider the exact sequence
$$0 \rightarrow \cEnd \tilde{E} \rightarrow \cEnd \tilde{E} |_{\tilde{X}_1} \oplus \cEnd \tilde{E} |_{\tilde{X}_2} \rightarrow \cEnd \tilde{E}|_C \rightarrow 0,$$
which is equal to
$$0 \rightarrow \cEnd \tilde{E} \rightarrow \cO_{\tilde{X}_1}^{n \times n} \oplus \cEnd G \rightarrow \cO_C^{n \times n} \rightarrow 0.$$
The curve $C$ is smooth and rational, so $H^1(\cO_C)=0$, and $H^i(\cO_{\tilde{X}_1})=0$ for $i>0$ as noted in the proof of Lemma~\ref{linebundle} above. We deduce that $H^i(\cEnd \tilde{E} ) = H^i(\cEnd G)$. Hence $\tilde{E}$ is exceptional because $G$ is exceptional.

The vector bundle $\tilde{E}$ deforms uniquely to a vector bundle $\tilde{\cE}$ over $\tilde{\cX}$ because $\tilde{E}$ is exceptional.
The restrictions of $\tilde{\cE}$ to the fibers of $\tilde{\cX}/T$ are exceptional by upper semicontinuity of cohomology.

Note that $E:=\cE|_X$ is torsion-free because $\cE$ is reflexive.
Indeed, $\cE$ satisfies Serre's condition $S_2$ and $X=(t=0) \subset \cX$ is a Cartier divisor. So the restriction $E=\cE|_{X}$ satisfies $S_1$, that is, $E$ is torsion-free.

By construction $E|_{X \setminus \{P\}}= A^{\oplus n}|_{X \setminus \{P\}}$, so $E^{\vee\vee}=A^{\oplus n}$ and $E^{\vee\vee}/E$ is supported at $P$.
\end{proof}

\begin{prop}\label{stability}
Suppose $\sH$ is a line bundle on $\cX/T$ which is ample on fibers.
Then for $t \neq 0$ the exceptional vector bundle $\tilde{\cE}|_{\cX'_t}$ on $\cX'_t$ constructed in Proposition~\ref{glueing} is slope stable with respect to $\sH|_{\cX'_t}$.
\end{prop}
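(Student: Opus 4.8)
The plan is to argue by contradiction, using the $\bQ$-Gorenstein degeneration to convert a destabilizing subsheaf on the smooth fibre into a subsheaf of the flat limit $E=\cE|_X$, and then to derive a contradiction from the topological constraint on $c_1$ modulo the specialization subgroup. Write $H=\sH|_{\cX'_t}$ and $F=\cE|_{\cX'_t}$, and recall from Proposition~\ref{glueing} that $E=\cE|_X$ is torsion-free with reflexive hull $E^{\vee\vee}=A^{\oplus n}$, so that $c_1(F)=n\,c_1(A)$ and $\mu_H(F)=c_1(A)\cdot H$. First I would reduce to the generic fibre: suppose $\cE_t$ is not $\mu$-stable for very general $t$. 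Then there is a saturated subsheaf $F'\subset F$ with $0<k:=\rk F'<n$ and $\mu_H(F')\ge\mu_H(F)$, defined over a punctured neighbourhood of $0$. Spreading $F'$ out over $T'\setminus\{0\}$ and invoking properness of the relative Quot scheme $\mathrm{Quot}_{\cE/\cX'/T'}$ over $T'$, I would take the flat limit to obtain a subsheaf $G_0\subset E$ of rank $k$ with the same Hilbert polynomial as the general member, hence with $\mu_H(G_0)=\mu_H(F')\ge c_1(A)\cdot H$.

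Next I would pin down $G_0$ using the structure of $E$. Since $A$ is reflexive of rank $1$ it is automatically stable, so $E^{\vee\vee}=A^{\oplus n}$ is $\mu$-polystable of slope $c_1(A)\cdot H$; as $E$ and $A^{\oplus n}$ agree away from $P$, every rank-$k$ subsheaf of $E$ has slope $\le c_1(A)\cdot H$. Combined with the previous inequality this forces $\mu_H(G_0)=c_1(A)\cdot H$, so the image of $G_0$ in $E^{\vee\vee}$ is a same-slope subsheaf of the semistable sheaf $A^{\oplus n}$. Its saturation $\overline{G_0}\subset A^{\oplus n}$ then has the same slope and the same $c_1$ (the torsion quotient $\overline{G_0}/G_0$ has $H$-degree $0$, hence is supported in codimension $\ge 2$ by ampleness of $H$). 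Since the Jordan--Hölder factors of $A^{\oplus n}$ are $n$ copies of $A$, those of the saturated same-slope subsheaf $\overline{G_0}$ are exactly $k$ copies of $A$, whence
$$c_1(G_0)=c_1(\overline{G_0})=k\,c_1(A)\in \Cl(X)=H_2(X,\bZ).$$

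The contradiction now comes from part~(1) of Theorem~\ref{mainthmintro}. On the one hand $c_1(G_0)=k\,c_1(A)$; on the other hand $G_0$ is the limit of $F'\subset F$ on the smooth fibres, so its class is the specialization of an integral class on $Y$, that is $c_1(G_0)=\spz(c_1(F'))\in\spz\!\left(H_2(Y,\bZ)\right)$. But $c_1(A)=c_1(F)/n$ generates the cokernel $H_2(X,\bZ)/\spz(H_2(Y,\bZ))\cong\bZ/n\bZ$, so $k\,c_1(A)\in\spz(H_2(Y,\bZ))$ forces $n\mid k$, contradicting $0<k<n$. Hence the generic fibre is $\mu$-stable; since $\mu$-stability is an open condition in the flat family $\cE/\cX'/T'$, after shrinking the germ $T'$ every fibre $\cX'_t$ with $t\ne0$ is $\mu$-stable with respect to $H$. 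Note that this single numerical obstruction excludes both unstable and strictly semistable destabilizers at once.

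I expect the main obstacle to be the flat-limit step, and specifically the identification $c_1(G_0)=\spz(c_1(F'))=k\,c_1(A)$: one must check that passing to the limit in $\mathrm{Quot}$ and then saturating inside $A^{\oplus n}$ leaves $c_1$ unchanged, and that the Chern class of the limit sheaf on $X$ is genuinely the specialization of the Chern class on the nearby smooth fibre. Once these compatibilities are established, the order-$n$ cokernel of $\spz$ does the remaining work.
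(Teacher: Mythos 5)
Your route is genuinely different from the paper's, and in the setting of Theorem~\ref{mainthmintro} it can be made to work. The paper never descends to the singular threefold $\cX'$: it works on the blowup $\tilde{\cX}$ with the polarization $\tilde{\sH}:=\pi^*{\sH'}^{\otimes N}\otimes\cO_{\tilde{\cX}}(-(na-1)W)$, takes a destabilizing quotient family $\tilde{\cE}\rightarrow\cC$ flat over $T'$ (openness of stability, \cite[2.3.1]{HL97}), restricts it to the \emph{reducible} special fiber $\tilde{X}=\tilde{X}_1\cup\tilde{X}_2$, and gets a contradiction by summing slopes over the two components, using semistability of $\tilde{A}^{\oplus n}$ on $\tilde{X}_1$ and \emph{strict} stability of $G$ on $W$ (Proposition~\ref{Gstable}, proved by restricting to a nodal anticanonical curve and invoking Burban--Kreussler). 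Your argument instead takes the flat limit of the destabilizer into $E=\cE|_X$ downstairs and kills it with the $\bZ/n\bZ$ cokernel of $\spz$. This is attractive: it handles unstable and strictly semistable destabilizers by one integrality obstruction, where the paper needs strict stability of $G$; and your reduction $c_1(G_0)=k\,c_1(A)$ is correct, though the Jordan--H\"older phrasing is loose (JH factors of a same-slope subsheaf are only defined up to codimension $2$; a cleaner version is a generic projection $A^{\oplus n}\rightarrow A^{\oplus k}$, injective on $\overline{G_0}$, whose cokernel has effective $c_1$ of $H$-degree $0$, hence zero). There is also no circularity in citing Theorem~\ref{mainthmintro}(1) and the fact that $c_1(A)$ generates the cokernel: both are proved (Lemma~\ref{MV}, and the local computation $(nc_1(A)\cdot K_X)_P=-a\bmod n$) independently of Proposition~\ref{stability}.

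Two points need repair. First, the step you yourself flag, $c_1(G_0)=\spz(c_1(F'))$, is the crux and is not automatic: $\cX'$ and $X$ are singular, $\det$ of the family need not commute with restriction to $X$, and the Quot limit $\cQ_0$ may have torsion along curves in $X$, which shifts $c_1(\cG|_X)$ against $c_1(G_0)$ by curve classes. It is fixable: after Langton-type modification make $\cQ_0$ torsion-free, so $\cE,\cG,\cQ$ are locally free outside a finite subset of $X$ (fiberwise freeness plus flatness over $T'$); then $\det\cG$ is a Weil divisor class on $\cX'$ whose restrictions to $Y$ and $X$ agree with $c_1(F')$ and $c_1(G_0)$ up to finitely supported (hence trivial) classes, and cycle specialization for a divisor proper over $T'$ identifies $[\det\cG|_X]$ with $\spz[\det\cG|_Y]$; alternatively, discrepancies are effective of $H$-degree $0$ (Hilbert polynomials are constant and $\sH$ is Cartier, so orbifold Riemann--Roch corrections sit in the constant term) and therefore vanish. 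None of this is in your write-up, and it is real work. Second, a scope mismatch: Proposition~\ref{stability} is stated in the generality of \S4, where $X$ may have several singularities $P_1,\ldots,P_r$ and the fibers $\cX'_t$ need not be smooth, whereas Theorem~\ref{mainthmintro}(1) and Lemma~\ref{MV} assume a unique singularity, smooth $Y$, and a locally trivial deformation elsewhere. As written your proof establishes the proposition only under the hypotheses of Theorem~\ref{mainthmintro}; to cover the stated generality you would have to rework the Mayer--Vietoris argument to show that the local class of $A$ at $P$ still has order $n$ modulo $\spz H_2(\cX'_t,\bZ)$. The paper's fiberwise argument on $\tilde{\cX}$ avoids all global topological input, which is exactly what makes it valid in that generality.
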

\begin{proof}
Let $\sH'$ be the pullback of $\sH$ to $\cX'$, and define
$$\tilde{\sH}:=\pi^*{\sH'}^{\otimes N} \otimes \cO_{\tilde{\cX}}(-(na-1)W)$$
for $N \gg 0$, where $W=\tilde{X}_2$ is the exceptional divisor of $\pi$.
Then $\tilde{\sH}$ is a line bundle on $\tilde{\cX}/T'$ which is ample on fibers, and its restriction to $\cX'_t$ for $t \neq 0$ coincides with the restriction of $\sH^{\otimes N}$.
In what follows we write $\mu(F)$ for the slope of a sheaf $F$ on a surface $S \subset \tilde{\cX}$ defined using the polarization on $S$ given by the restriction of $\tilde{\sH}$, that is,
$$\mu(F)= \deg F / \rk F := (c_1(F) \cdot \tilde{\sH}|_S) /\rk(F)$$

Suppose $\tilde{\cE}|_{\cX'_t}$ is not slope stable with respect to $\sH|_{\cX'_t}$ for $t \neq 0$.
Then, by the argument for openness of stability \cite[2.3.1]{HL97}, after a finite surjective basechange (which we suppress in our notation),
there is a surjection $\tilde{\cE} \rightarrow \cC$ such that $\cC$ is flat over $T'$, $0 < \rk (\cC) < \rk(\tilde{\cE})$, and
$\mu(\cC|_{\cX'_t}) \le \mu(\tilde{\cE}|_{\cX'_t})$ for all $t \in T'$.
(Note that, by flatness of $\cC$ over $T$, $\cC|_{\tilde{X}}$ is a sheaf of constant rank on the reducible surface $\tilde{X}$.
Thus $\mu(\cC|_{\tilde{X}})$ is well-defined.)

Let $\tilde{E} \rightarrow C$ and $\tilde{E}_i \rightarrow C_i$ denote the restrictions of $\tilde{\cE} \rightarrow \cC$ to $\tilde{X}$ and $\tilde{X}_i$ for $i=1,2$.
Recall that $\tilde{E}_1$ is the direct sum of $n$ copies of the line bundle $\tilde{A}$ and $\tilde{E}_2$ is the exceptional vector bundle $G$ on $W$.
In particular $\tilde{E}_1$ is slope semistable, and $\tilde{E}_2$ is slope stable by Proposition~\ref{Gstable} below.
Thus $\mu(C_1) \ge \mu(\tilde{E}_1)$ and $\mu(C_2) > \mu(\tilde{E}_2)$.
We deduce that $\mu(C)=\sum \mu(C_i) > \mu(\tilde{E})=\sum \mu(\tilde{E}_i)$, a contradiction.
\end{proof}

\section{Localized exceptional bundles}

\begin{prop}\label{Wbdles}
Let $n,a$ be positive integers such that $a<n$ and $(a,n)=1$. Write
$$W=W_{n,a}:=(XY=Z^n+T^a) \subset \bP(1,na-1,a,n)$$
Let $C_1$ and $C_2$ be the smooth rational curves on $W$ defined by $C_1=(Z=0)$ and $C_2=(T=0)$.
Then there exist exceptional vector bundles $F_1$ and $F_2$ on $W$ of ranks $a$ and $n$ such that for each $j=1,2$ we have
\begin{enumerate}
\item $H^i(F_j^{\vee})=0$ for all $i$,
\item $H^i(F_j)=0$ for $i>0$,
\item $F_j$ is generated by global sections, and
\item $F_j|_{C_j} \simeq \cO_{C_j}(1)^{\oplus \rk F_j}$.
\end{enumerate}
(Here $\cO_{C_j}(1)$ denotes the line bundle of degree $1$ on the smooth rational curve $C_j$.)
\end{prop}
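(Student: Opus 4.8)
The plan is to construct $F_1$ and $F_2$ \emph{simultaneously} by induction on the second parameter $a$, using Proposition~\ref{glueing} for the inductive step. For the base case $a=1$ the variable $T$ occurs linearly and may be eliminated, so that $W_{n,1}\simeq\bP(1,n-1,1)$ is a weighted projective plane. Here I would write down the two bundles by hand: the rank-one $F_1$ is an appropriate power of $\cO(1)$, and the rank-$n$ bundle $F_2$ is produced from the toric/conic-bundle structure of $\bP(1,n-1,1)$. Properties (1)--(4) are then checked directly from the cohomology of sheaves on a weighted projective plane. This anchors the induction.

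For the inductive step I would realize $W_{n,a}$, together with its single quotient singularity $Q$, as the general fiber of a one-parameter $\bQ$-Gorenstein deformation of a normal projective surface $X'$ which carries, in addition to a persisting copy of $Q$, one extra Wahl singularity $P'$ that is the one being $\bQ$-Gorenstein smoothed. The key arithmetic input is a recursion on the second parameter: for the rank-$n$ bundle one takes $P'$ of type $\tfrac{1}{n^2}(1,na'-1)$ with $a'<a$, while for the rank-$a$ bundle one takes $P'$ of type $\tfrac{1}{a^2}(1,aa''-1)$ with $a''<a$, the latter accounting for the asymmetry between $C_1$ and $C_2$ forced by $a<n$. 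Since $W_{n,a}$ is rational (a conic bundle over $\bP^1$), the general fiber satisfies $H^{2,0}=0$ and $H^1=0$. Applying Proposition~\ref{blowup} at $P'$ gives $\pi\colon\tilde\cX\to\cX'$ whose $\pi$-exceptional divisor is the smaller surface $W_{n,a'}$ (resp.\ $W_{a,a''}$). By the induction hypothesis the appropriate bundle exists there, so feeding it into Proposition~\ref{glueing} as $G$ produces an exceptional sheaf $\cE$ on $\tilde\cX$ whose restriction to the general fiber $W_{n,a}$ is the desired $F_2$ (resp.\ $F_1$); because Proposition~\ref{glueing} preserves rank, the ranks come out as $n$ and $a$.

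I would then propagate (1)--(4) through the degeneration. On the central fiber $\tilde X=\tilde X_1\cup_C W$ the glued bundle $\tilde E$ restricts to $\tilde A^{\oplus r}$ on $\tilde X_1$ and to $G$ on $W$, with $r:=\rk F_j$ and $G|_C\simeq\cO_C(1)^{\oplus r}$ by the induction hypothesis. Running the Mayer--Vietoris sequence of Proposition~\ref{glueing}, together with the inductive vanishings for $G$ and a choice of $\tilde A$ (as in Lemma~\ref{linebundle}, after twisting by a sufficiently positive line bundle from $\cX'$) that is acyclic with acyclic inverse on $\tilde X_1$, yields (2) $H^{>0}(\tilde E)=0$ and, applied to the dual $\tilde E^{\vee}$, the stronger (1) $H^{\bullet}(\tilde E^{\vee})=0$. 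Upper semicontinuity and constancy of $\chi$ transport these to $F_1,F_2$ on the general fiber. Global generation (3) is open, so it follows from global generation of $\tilde E$ once $H^1$ vanishes and sections lift. Finally, for (4) I would identify the flat limit of $C_j\subset W_{n,a}$ inside $\tilde X$ and read off $F_j|_{C_j}$ from $\tilde A$ or from $G|_C$ there.

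The main obstacle is the first half of the inductive step: producing the degeneration of $W_{n,a}$ with \emph{exactly} the predecessor Wahl singularity $P'$ and verifying the standing hypotheses of the Glueing section for it, namely the local-to-global surjectivity \eqref{classgplocalglobal} for the singularities $\{P',Q\}$ of $X'$ and $H^{2,0}=H^1=0$ for the general fiber. Making the recursion $a\mapsto a'$ (resp.\ $a\mapsto a''$) explicit---e.g.\ through a toric or weighted-hypersurface model analogous to Proposition~\ref{blowup}---and checking that the resulting family is $\bQ$-Gorenstein with the prescribed central fiber is where the real work lies. The secondary difficulty is the bookkeeping needed to track the strong dual vanishing (1) and the restriction (4) through the family, since neither the behaviour of $F_j^{\vee}$ nor the precise limit of $C_j$ is automatically controlled by the deformation.
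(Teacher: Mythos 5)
Your proposal has a genuine gap at its core inductive mechanism, and it is precisely the gap that the paper's key device --- mutation --- is designed to avoid. You propose to obtain the rank-$n$ bundle $F_2$ on $W_{n,a}$ by glueing across a degeneration whose smoothed Wahl singularity has type $\frac{1}{n^2}(1,na'-1)$ with $a'<a$; you never construct such a degeneration, nothing in the paper supplies one, and the natural candidate is circular: the family $(XY=Z^n+tT^a)$ does keep index $n$, but its central fiber is $\bP(1,na-1,n^2)$ with Wahl point of type exactly $\frac{1}{n^2}(1,na-1)$, so Proposition~\ref{blowup} returns an exceptional divisor isomorphic to $W_{n,a}$ itself and the recursion does not descend in $a$. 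The paper needs no second degeneration: it runs the induction only for the rank-$a$ bundle, via the single family $(XY=tZ^n+T^a)$ of Construction~\ref{Wdegen} degenerating $W_{n,a}$ to $\bP(1,na-1,a^2)$ (Wahl index $a$, exceptional divisor $W_{a,b}$ with $b=n\bmod a$ --- the Euclidean recursion, which is the half of your scheme that does match the paper), and then produces $F_2$ from $F_1$ by a \emph{left mutation}: $F_2$ is the dual of the kernel of the evaluation map $H^0(F_1)\otimes\cO_W\to F_1$, exceptional by \cite{G90}, of rank $n$ because Riemann--Roch gives $h^0(F_1)=\chi(F_1)=a+n$. Properties (1)--(3) for $F_2$ fall out of the sequence $0\to F_1^{\vee}\to H^0(F_1)^*\otimes\cO_W\to F_2\to 0$, and (4) follows from rigidity: $c_1(F_2)\cdot C_2=n$ and $H^1(\cEnd F_2|_{C_2})=0$ since $K_W+C_2\sim -aH<0$, so the rigid rank-$n$, degree-$n$ bundle $F_2|_{C_2}$ on $\bP^1$ must be $\cO_{C_2}(1)^{\oplus n}$. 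The same device supplies the base case you only gesture at: on $W_{n,1}=\bP(1,n-1,1)$ the paper takes $F_1=\cO_W(n-1)$ and gets the rank-$n$ bundle for free, whereas your rank-$n$ bundle ``from the toric/conic-bundle structure'' is not constructed (and $W$ has $b_2=1$, so it is not a conic bundle over $\bP^1$).

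Two steps you treat as routine are also where the paper does real work. Global generation (3) is not settled by openness: openness transports generation from the special fiber outward, so you must prove the glued bundle $\tilde E$ is globally generated on all of $\tilde X_1$, not merely along $C$; and you cannot arrange this by twisting $\tilde A$ by a ``sufficiently positive'' line bundle from $\cX'$, because conditions (1) and (4) pin down the normalization --- the paper must take $\tilde A=\cO_{\tilde X_1}(D')$ for the specific toric boundary divisor $D'$ with $D'\cdot C=1$, which incidentally also bypasses your worry about verifying (\ref{classgplocalglobal}), since no appeal to Lemma~\ref{linebundle} is made in this toric setting. The actual argument is delicate: rigidity of the exceptional bundle $\tilde E$ makes it invariant under a torus action extended from $\tilde X_1$ to all of $\tilde X$, so the non-generated locus is a union of toric strata, and generation along the remaining boundary curve $B$ is proved by blowing up $k=(D')^2$ interior points of $D'$ (where $n=ka+b$) to create a ruling with sections $B$ and $C$. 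Likewise your step (4) is handled in the paper by the explicit flat family of curves $(Z=0)\subset\cX$, which specializes $C_1$ to $B$, along which $\tilde E|_B\simeq\cO_B(1)^{\oplus a}$ has already been computed. As written, your proposal is a framework whose decisive inductive input (the index-$n$ degeneration) is missing and likely unavailable; the mutation is the idea that replaces it.
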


\begin{con}\label{Wdegen}
The proof of Proposition~\ref{Wbdles} uses the following degeneration of $W$.
Consider the one parameter family of normal surfaces
$$\cX=(XY=tZ^n+T^a) \subset \bP(1,na-1,a,n) \times \bC^1_t.$$
Note that $\cX_t \simeq W$ for $t \neq 0$. The special fiber $X:=\cX_0$ is isomorphic to the weighted projective plane $\bP(1,na-1,a^2)$ via the morphism
$$\bP(1,na-1,a^2) \rightarrow X=(XY=T^a) \subset \bP(1,na-1,a,n),$$
$$(U,V,W) \mapsto (X,Y,Z,T)=(U^a , V^a , W , UV).$$
The surface $X$ has two singular points $P=(0 \colon 0 \colon 1)$ and $Q=(0 \colon 1 \colon 0)$.
The point $P$ is a Wahl singularity of type $\frac{1}{a^2}(1,ab-1)$ where $b = n \bmod a$, and the germ $(P \in \cX)/(0 \in \bC^1_t)$ is a $\bQ$-Gorenstein smoothing of $(P \in X)$. The point $Q$ is a cyclic quotient singularity of type $\frac{1}{na-1}(1,a^2)$ and the deformation $\cX/(0 \in \bC^1_t)$ is locally trivial near $Q$.
\end{con}

\begin{proof}[Proof of Proposition~\ref{Wbdles}]
We first construct $F_2$ given $F_1$.
The vector bundle $F_1$ is globally generated, that is, the natural morphism
\begin{equation} \label{gs}
H^0(F_1) \otimes \cO_W \rightarrow F_1
\end{equation}
is surjective.
We define $F_2$ as the dual of the kernel of (\ref{gs}).
Thus $F_2$ is a vector bundle such that  $\rk(F_2)=h^0(F_1)-\rk(F_1)$ and $c_1(F_2)=c_1(F_1)$.
The bundle $F_2$ is exceptional and $H^i(F_2^{\vee})=0$ for all $i$ by \cite[\S2.4]{G90}.
Indeed, in the terminology of op. cit., the pair $(\cO_W,F_1)$ is an \emph{exceptional pair}, and $(F_2^{\vee},\cO_W)$ is its \emph{left mutation}.
Moreover, the exact sequence
$$0 \rightarrow F_1^{\vee} \rightarrow H^0(F_1)^* \otimes \cO_W \rightarrow F_2 \rightarrow 0$$
shows that $F_2$ is globally generated and $H^i(F_2)=0$ for $i > 0$.

By Lemma~\ref{classgroup} the homology group $H_2(W,\bZ)$ is isomorphic to $\bZ$, generated by the restriction $H$ of the positive generator $A:=c_1(\cO_{\bP}(1))$ of $H_4(\bP,\bZ)$, where $\bP=\bP(1,na-1,a,n)$ denotes the ambient weighted projective space.
(See \S\ref{wps} for background on weighted projective spaces.)
Note that $H^2=1/(na-1)$ (because $H^2= A^2 \cdot W = (na) A^3$ and $A^3=1/((na-1)an)$).
Now $C_1 =(Z=0) \sim aH$ and $F_1|_{C_1} \simeq \cO_{C_1}(1)^{\oplus a}$, thus $c_1(F_1) \cdot aH=a$ and so $c_1(F_1)=(na-1)H$.
Since $F_1$ is an exceptional bundle of rank $a$ we have
$$c_2(F_1)=\frac{a-1}{2a}(c_1(F_1)^2+a+1),$$
see Remark~\ref{RR}.
The canonical class $K_W$ of $W$ is given by the adjunction formula
$$K_W=(K_{\bP}+W)|_W=-(a+n)H.$$
Now the Riemann--Roch formula gives
$$h^0(F_1)=\chi(F_1)=a\chi(\cO_W)+\frac{1}{2}c_1(F_1)(c_1(F_1)-K_W)-c_2(F_1)=a+n.$$
Thus $F_2$ is a vector bundle of rank $n$.

It remains to show that $F_2|_{C_2} \simeq \cO_{C_2}(1)^{\oplus n}$.
The bundle $F_2|_{C_2}$ on $C_2 \simeq \bP^1$ has rank $n$ and degree
$$c_1(F_2) \cdot C_2 = c_1(F_1) \cdot n H = (na-1)nH^2 = n.$$
So it suffices to show that $F_2|_{C_2}$ is rigid, that is, $H^1(\cEnd F_2|_{C_2})=0$.
Consider the exact sequence
$$0 \rightarrow \cEnd F_2 (-C_2) \rightarrow \cEnd F_2 \rightarrow \cEnd F_2|_{C_2} \rightarrow 0.$$
We have $H^1(\cEnd F_2)=0$ and $$H^2(\cEnd F_2(-C_2))=H^0(\cEnd F_2(K_W+C_2))^*=H^0(\cEnd F_2(-aH))^*=0$$
because $K_W+C_2 \sim -aH < 0$ and $\End F_2 = \bC$. So $H^1(\cEnd F_2|_{C_2})=0$ as required.

We now prove the existence of $F_1$ by induction on $a$.
If $a=1$ then $W=\bP(1,n-1,1)$ and we can take $F_1=\cO_W(n-1)$. Now suppose $a>1$.
Consider the degeneration $\cX/T$ of $W$ described in Construction~\ref{Wdegen}.
Write $n=ka+b$, $0<b<a$.
The point $P=(0 \colon 0 \colon 1) \in X=\bP(1,na-1,a^2)$ is a Wahl singularity of type $\frac{1}{a^2}(1,ab-1)$, and the germ $(P \in \cX)/T$ is a $\bQ$-Gorenstein smoothing of $(P \in X)$.
Let $(0 \in T') \rightarrow (0 \in T)$ and $\pi \colon \tilde{\cX} \rightarrow \cX'$ be the basechange and blowup of Proposition~\ref{blowup}.
The exceptional divisor of $\pi$ is $\tilde{X}_2=W_{a,b}$. By induction and the construction of $F_2$ from $F_1$ above, there exists a vector bundle $G$ on $\tilde{X}_2$ of rank $a$ such that $H^i(G^{\vee})=0$ for all $i$, $H^i(G)=0$ for $i>0$, $G$ is globally generated, and $G|_C \simeq \cO_C(1)^{\oplus a}$, where $C = \tilde{X}_1 \cap \tilde{X_2}$ is the double curve of $\tilde{X}=\tilde{\cX}_0$.

The surface $X=\bP(1,na-1,a^2)$ is the toric variety associated to a free abelian group $N$ and a fan $\Sigma$ in $N \otimes_{\bZ} \bR$ as follows.
The group $N \simeq \bZ^2$ is generated by vectors $v_0,v_1,v_2$ satisfying the relation $$v_0+(na-1)v_1+a^2v_2=0.$$
The fan $\Sigma$ is the complete fan with rays generated by $v_0,v_1,v_2$.
The birational morphism $p \colon \tilde{X}_1 \rightarrow X$ is the weighted blowup of the point $P =(0 \colon 0 \colon 1) \in X$ with weights $\frac{1}{a^2}(1,ab-1)$
with respect to the orbifold coordinates $u=U/W,v=V/W$.
The morphism $p$ corresponds to the refinement $\tilde{\Sigma}$ of the fan $\Sigma$ obtained by adding the ray generated by
$$w:=\textstyle{\frac{1}{a^2}}(v_0+(ab-1)v_1) \in N.$$

Let $D$ be the divisor $(V=0) \subset X =\bP(1,na-1,a^2)$ and $D' \subset \tilde{X}_1$ its strict transform.
Note that $D' \subset \tilde{X}_1$ is the toric boundary divisor corresponding to the ray $\bR_{\ge 0} \cdot v_1$ of $\tilde{\Sigma}$.
The divisor $D'$ is Cartier and $D' \cdot C = 1$.
By Proposition~\ref{glueing} there is an exceptional vector bundle $\tilde{E}$ on $\tilde{X}$ obtained by glueing $\cO_{\tilde{X}_1}(D')^{\oplus n}$ on $\tilde{X}_1$ and $G$ on $\tilde{X}_2$ along $\cO_C(1)^{\oplus a}$ on $C$, and $\tilde{E}$ deforms to an exceptional bundle $F_1$ on the general fiber $W$ of $\tilde{\cX}/T$. It remains to show that $F_1$ satisfies the properties (1)--(4) in the statement.

(1,2) It suffices to verify the corresponding vanishings for $\tilde{E}$.
We have the exact sequence of sheaves on $\tilde{X}$
$$0 \rightarrow \tilde{E}^{\vee} \rightarrow \cO_{\tilde{X}_1}(-D')^{\oplus a} \oplus G^{\vee} \rightarrow \cO_C(-1)^{\oplus a} \rightarrow 0.$$
Now $H^i(G^{\vee})=0$ for all $i$ by assumption, $H^i(\cO_C(-1))=0$ for all $i$,
and we find $H^i(\cO_{\tilde{X_1}}(-D'))=0$ for all $i$ using the exact sequence
$$0 \rightarrow \cO_{\tilde{X_1}}(-D') \rightarrow \cO_{\tilde{X}_1} \rightarrow \cO_{D'} \rightarrow 0.$$
Hence $H^i(\tilde{E}^{\vee})=0$ for all $i$. Similarly, for (2) we consider the exact sequence
\begin{equation} \label{ncdec}
0 \rightarrow \tilde{E} \rightarrow \cO_{\tilde{X}_1}(D')^{\oplus a} \oplus G \rightarrow \cO_C(1)^{\oplus a} \rightarrow 0.
\end{equation}
A toric calculation shows that $(D')^2=k>0$, where $n=ka+b$ as above. (Indeed, with notation as above, we compute $v_2+w=-kv_1$, so $(D')^2=k$ by \cite{F93}, p. 44.)
The exact sequence
$$0 \rightarrow \cO_{\tilde{X}_1} \rightarrow \cO_{\tilde{X}_1}(D') \rightarrow \cO_{D'}(k) \rightarrow 0$$
shows $H^i(\cO_{\tilde{X}_1}(D'))=0$ for $i>0$.
The exact sequences
$$0 \rightarrow \cO_{\tilde{X}_1}(-C) \rightarrow \cO_{\tilde{X}_1}(D'-C) \rightarrow \cO_{D'}(k-1) \rightarrow 0$$
and
$$0 \rightarrow \cO_{\tilde{X}_1}(-C) \rightarrow \cO_{\tilde{X}_1} \rightarrow \cO_C \rightarrow 0$$
show that $H^1(\cO_{\tilde{X}_1}(D'-C))=0$, hence the restriction map $H^0(\cO_{\tilde{X}_1}(D')) \rightarrow H^0(\cO_C(1))$ is surjective.
We deduce that $H^i(\tilde{E})=0$ for $i>0$.

(3) Since $H^1(\tilde{E})=0$ it suffices to prove that $\tilde{E}$ is globally generated.
The bundle $G$ is globally generated by assumption and the restriction map $H^0(\cO_{\tilde{X}_1}(D')) \rightarrow H^0(\cO_{C}(1))$ is surjective as proved above.
It follows that $\tilde{E}$ is globally generated along $\tilde{X}_2$.

Let $S$ denote the cokernel of the natural map
$$H^0(\tilde{E}) \otimes \cO_{\tilde{X}_1} \rightarrow \tilde{E}|_{\tilde{X}_1}.$$
Thus $\tilde{E}$ is globally generated iff $S=\emptyset$.
We first show that $S$ is a union of toric strata of the toric surface $\tilde{X}_1$.
The action $\sigma$ of the big torus $T \simeq (\bC^*)^2$ on $\tilde{X}_1$ restricts to an action on $C$
given by
$$C=(T=0) \subset \tilde{X}_2=(XY=Z^a+T^b) \subset \bP(1,ab-1,b,a)$$
$$T \ni t \colon (X , Y , Z) \mapsto (X , \chi(t)^aY , \chi(t)Z)$$
for some character $\chi$ of $T$.
Consider the action $\sigma'$ of $T$ on $\tilde{X}_1$ given by $\sigma'(t,x)=\sigma(t^b,x)$.
Then $\sigma'$ extends to an action of $T$ on $\tilde{X}$ defined on $\tilde{X}_2$ by
$$t \colon (X , Y , Z , T) \mapsto (X , \chi(t)^{ab}Y , \chi(t)^bZ , \chi(t)^aT).$$
Since $\tilde{E}$ is exceptional, it has no nontrivial deformations, so $t^*\tilde{E}$ is isomorphic to $\tilde{E}$ for all $t \in T$.
It follows that $S \subset \tilde{X}_1$ is a union of toric strata. Now $\tilde{E}$ is globally generated along $C$, so it suffices to show that $\tilde{E}$ is globally generated along the toric boundary divisor $B \subset \tilde{X}_1$ disjoint form $C$.
Note that $B$ maps isomorphically to $(W=0) \subset X$ under $p$.

The divisor $D'$ intersects each of $B$ and $C$ transversely in a smooth point of $\tilde{X}_1$.
In particular, $\tilde{E}|_B \simeq \cO_{B}(1)^{\oplus a}$.
Let $f \colon \tilde{X}_1' \rightarrow \tilde{X}_1$ be the blowup of  $k$ distinct interior points of $D'$ so that the strict transform $D''$ satisfies $D'' = f^*D' - \sum_{i=1}^k E_i$ where the $E_i$ are the exceptional curves. Then $(D'')^2=0$ and so $D''$ defines a ruling of $\tilde{X}_1'$ with sections $B$ and $C$. So $H^0(\cO_{\tilde{X}_1'}(D'')) \subset H^0(\cO_{\tilde{X}_1}(D'))$ maps isomorphically to $H^0(\cO_{B}(1))$ and $H^0(\cO_{C}(1))$. Since $G=\tilde{E}|_{\tilde{X}_2}$ is globally generated along $C$ we deduce that $\tilde{E}$ is globally generated along $B$.

(4) The divisor $\cB:=(Z=0) \subset \cX/\bC^1_t$ is a $\bP^1$-bundle over the base with fiber $C_2 \subset W = \cX_t$ for $t \neq 0$ and $B=(W=0) \subset X$ for $t=0$. As noted above we have $\tilde{E}|_B \simeq \cO_B(1)^{\oplus a}$, so also $F_1|_{C_2} \simeq \cO_{C_2}(1)^{\oplus a}$.
\end{proof}

\begin{lem}\label{classgroup}
Let $W=(XY=Z^n+T^a) \subset \bP(1,na-1,a,n)$. Then $H_2(W,\bZ)$ is isomorphic to $\bZ$, generated by the restriction of the positive generator $A:=c_1(\cO_{\bP}(1)) \in H_4(\bP,\bZ)$ of the homology of the ambient weighted projective space $\bP=\bP(1,na-1,a,n)$.
\end{lem}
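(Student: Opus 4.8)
The plan is to isolate an affine chart of $W$ isomorphic to $\bA^2$ and then reduce the computation of $H_2(W,\bZ)$ to that of the complementary curve by means of the Borel--Moore long exact sequence of a closed--open decomposition. Write $\bP=\bP(1,na-1,a,n)$ with weighted homogeneous coordinates $X,Y,Z,T$, and set $D:=W\cap(X=0)$ and $U:=W\setminus D=W\cap(X\neq 0)$. Since $\wt(X)=1$, the chart $(X\neq 0)\subset\bP$ is isomorphic to $\bA^3$ with coordinates $Y,Z,T$, and inside it $W$ is the graph $(Y=Z^n+T^a)$; hence $U\cong\bA^2$.

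Because $W$ is compact and $D\subset W$ is closed with open complement $U$, I would invoke the Borel--Moore long exact sequence
$$\cdots\to H_k(D)\to H_k(W)\to H^{\mathrm{BM}}_k(U)\to H_{k-1}(D)\to\cdots,$$
in which $H_k(W)$ and $H_k(D)$ are ordinary singular homology (as $W$ and $D$ are compact) and the first map is proper pushforward along the closed inclusion, the second restriction to the open set. Since $U\cong\bA^2$ is homeomorphic to $\bR^4$, its Borel--Moore homology is $\bZ$ in degree $4$ and $0$ in all other degrees. Reading off the stretch $H^{\mathrm{BM}}_3(U)\to H_2(D)\to H_2(W)\to H^{\mathrm{BM}}_2(U)$, whose two outer groups vanish, yields an isomorphism $H_2(D)\xrightarrow{\ \sim\ }H_2(W)$. (As a sanity check the same sequence gives $H_4(W)\cong H^{\mathrm{BM}}_4(U)=\bZ$, the fundamental class of the connected compact surface $W$.)

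It then remains to compute $H_2(D,\bZ)$ and to identify the generator. The curve $D=(Z^n+T^a=0)\subset\bP(na-1,a,n)$ is cut out by a polynomial which, since $(a,n)=1$, is irreducible and reduced; equivalently, $D$ is the cone with vertex $(1\colon 0\colon 0)$ over the unique reduced point of $(Z^n+T^a=0)\subset\bP(a,n)$. Thus $D$ is an irreducible complete curve, so $H_2(D,\bZ)=\bZ$, generated by its fundamental class $[D]$, which maps to $[D]\in H_2(W,\bZ)$ under the isomorphism above. Finally, $(X=0)$ is the zero locus of the section $X$ of $\cO_{\bP}(1)$, hence represents $A=c_1(\cO_{\bP}(1))$; and since the scheme $(X=0)\cap W=(X,\,Z^n+T^a)$ is reduced, its cycle class $[D]$ is exactly the restriction of $A$ to $W$. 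Therefore $H_2(W,\bZ)=\bZ\cdot(A|_W)$, as asserted.

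The hard part will be the bookkeeping around the Borel--Moore sequence and the generator: I must confirm that the sequence applies verbatim despite the quotient singularity of $W$ at $(0\colon 1\colon 0\colon 0)$ (it does, as $W$ is a complex algebraic variety, hence locally contractible and locally compact, and $H^{\mathrm{BM}}_*(U)$ depends only on the homeomorphism type $\bR^4$), and that the generator is genuinely $A|_W$ rather than a proper multiple. The latter reduces entirely to the reducedness of $Z^n+T^a$, which is guaranteed by $(a,n)=1$; everything else is a direct chart-by-chart verification.
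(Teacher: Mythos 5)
Your argument is correct, but it takes a genuinely different route from the paper's. The paper proves this lemma by degeneration: by Construction~\ref{Wdegen}, $W$ is the general fiber of a $\bQ$-Gorenstein smoothing of $X=\bP(1,na-1,a^2)$ at its Wahl singularity of type $\frac{1}{a^2}(1,ab-1)$, and Lemma~\ref{MV} (Mayer--Vietoris with the Milnor fiber, a rational homology ball with first homology $\bZ/a\bZ$) shows that the specialization map $H_2(W,\bZ)\to H_2(X,\bZ)=\bZ\cdot L$ is injective with cokernel $\bZ/a\bZ$; since $A|_X=aL$ and $A|_W$ specializes to $A|_X$, the class $A|_W$ generates. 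You instead work directly on $W$: the decomposition $W=D\sqcup U$ with $U=W\cap(X\neq 0)\cong \bA^2$ (using $\wt(X)=1$) and the Borel--Moore exact sequence give $H_2(W,\bZ)\cong H_2(D,\bZ)=\bZ\cdot[D]$, and then $[D]=A|_W$ because $Z^n+T^a$ is irreducible and squarefree --- so for you the hypothesis $(a,n)=1$ enters through irreducibility of $Z^n+T^a$, whereas in the paper it enters through the singularity being of Wahl type. Your proof is more elementary and self-contained (no degeneration, no Milnor-fiber topology, and it yields the other homology groups of $W$ for free), while the paper's proof reuses Lemma~\ref{MV}, which it needs anyway for Theorem~\ref{mainthmintro}, and records how the generator behaves under specialization, which is what the induction in Proposition~\ref{Wbdles} sits inside. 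Two points worth making explicit in your write-up: the ``restriction'' $A|_W$ should be read, as in \S\ref{reflexive}, as $c_1$ of the reflexive sheaf $\cO_W(1)=\bigl(\cO_{\bP}(1)|_W\bigr)^{\vee\vee}$ (neither sheaf is a line bundle at the singular points), so $A|_W=[D]$ amounts to $\cO_W(1)\cong\cO_W(D)$, i.e.\ the section $X$ vanishing to order exactly one along $D$, which is precisely what your generic reducedness provides; and $H_2(D,\bZ)=\bZ\cdot[D]$ holds because $D$ is an irreducible projective curve (your cone description in fact shows $D$ is unibranch, hence homeomorphic to $S^2$).
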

\begin{proof}
By Construction~\ref{Wdegen} the surface $W$ is obtained from the weighted projective plane $X=\bP(1,na-1,a^2) \subset \bP$ by smoothing the Wahl singularity $P \in X$ of type $\frac{1}{a^2}(1,ab-1)$. We have $A|_X=aL$, where $L$ denotes the positive generator of $H_2(X,\bZ)$.
Now by Lemma~\ref{MV} below we deduce that $H_2(W,\bZ)$ is generated by $A|_W$ as required.
\end{proof}

\begin{lem} \label{MV}
Let $X$ be a compact normal surface and $P \in X$ a Wahl singularity of type $\frac{1}{n^2}(1,na-1)$.
Let $L$ denote the link of $P \in X$. Assume that $H_2(X,\bZ) \rightarrow H_1(L,\bZ)$ is surjective.
Let $\cX/(0 \in T)$ be a deformation of $X$ over a smooth curve germ $(0 \in T)$ such that the germ $(P \in \cX)/(0 \in T)$ is a $\bQ$-Gorenstein smoothing of $P \in X$ and the deformation $\cX/T$ is locally trivial elsewhere. Let $Y$ denote a general fiber of $\cX/T$.
Then the specialization map
$$\spz \colon H_2(Y,\bZ) \rightarrow H_2(X,\bZ)$$
is injective with cokernel isomorphic to $\bZ/n\bZ$.
\end{lem}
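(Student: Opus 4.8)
The plan is to realize $\spz$ as the map induced by collapsing the Milnor fibre and to compare the Mayer--Vietoris sequences of two matching decompositions of $Y$ and $X$. Let $B \subset X$ be a small contractible neighbourhood of $P$ (the cone over $L$), and set $X^o := X \setminus \mathrm{int}\, B$, so $\partial B = \partial X^o \simeq L$. Since the deformation $\cX/T$ is $\bQ$-Gorenstein at $P$ and locally trivial elsewhere, the general fibre is obtained topologically as $Y = X^o \cup_L M$, where $M$ is the Milnor fibre of the smoothing, while $X = X^o \cup_L B$. The specialization map $\spz \colon H_2(Y) \to H_2(X)$ is induced by the map $c \colon Y \to X$ which is the identity on $X^o$ and collapses $M$ onto the cone point of $B$; this map respects the two decompositions and restricts to the identity on the overlap $L$.

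First I would record the homology of the three pieces. The link $L$ is the lens space $S^3/\frac{1}{n^2}(1,na-1)$, so $H_1(L,\bZ) = \bZ/n^2\bZ$ and $H_2(L,\bZ)=0$; the neighbourhood $B$ is contractible; and, as recalled in \S\ref{Wahl}, $M$ is a rational homology ball with $H_i(M,\bZ)=0$ for $i>1$ and $H_1(M,\bZ)=\bZ/n\bZ$ (the abelianization of $\pi_1(M)=\bZ/n\bZ$). Moreover the inclusion $L = \partial M \hookrightarrow M$ induces on $H_1$ the surjection $\bZ/n^2\bZ \to \bZ/n\bZ$, since it does so on $\pi_1$.

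Next I would write the naturality ladder between the two Mayer--Vietoris sequences induced by $c$. Using $H_2(L)=H_2(M)=H_2(B)=0$ and $H_1(B)=0$, the relevant portions give a short exact sequence
\[
0 \to H_2(X^o) \xrightarrow{\iota_Y} H_2(Y) \xrightarrow{\partial_Y} \ker(j_Y) \to 0,
\]
and likewise $0 \to H_2(X^o) \xrightarrow{\iota_X} H_2(X) \xrightarrow{\partial_X} \ker(j_X) \to 0$, where $j_Y = (i_{X^o*}, i_{M*}) \colon H_1(L) \to H_1(X^o)\oplus H_1(M)$ and $j_X = i_{X^o*} \colon H_1(L)\to H_1(X^o)$, so that $\ker(j_Y) = \ker(i_{X^o*}) \cap \ker(i_{M*})$ and $\ker(j_X) = \ker(i_{X^o*})$. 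Because $c$ is the identity on $X^o$ and on $L$, the left column of the ladder is the identity on $H_2(X^o)$, the maps $\partial_Y,\partial_X$ are compatible over the identity of $H_1(L)$, and the induced right column is the inclusion $\ker(j_Y)\hookrightarrow\ker(j_X)$. Applying the snake lemma to this map of short exact sequences, the identity left column forces $\ker(\spz)=0$ and $\cok(\spz)\cong\ker(j_X)/\ker(j_Y)$; this already yields injectivity.

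Finally I would compute the cokernel using the hypothesis. The map $H_2(X)\to H_1(L)$ of the statement is exactly the connecting homomorphism $\partial_X$ (it is the cap-product/restriction map identified with the Mayer--Vietoris boundary via Poincar\'e duality as in \S\ref{reflexive}). By hypothesis $\partial_X$ is surjective, so by exactness $\ker(i_{X^o*}) = \im(\partial_X) = H_1(L)$; that is, $i_{X^o*}=0$. Hence $\ker(j_X)=H_1(L)$ and $\ker(j_Y)=\ker(i_{M*})$, whence
\[
\cok(\spz)\cong H_1(L)/\ker(i_{M*}) \cong \im(i_{M*}) = H_1(M,\bZ) = \bZ/n\bZ,
\]
the second isomorphism being the first isomorphism theorem together with the surjectivity of $i_{M*}$. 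I expect the main obstacle to be the soft topological bookkeeping rather than any computation: namely verifying that $\spz$ is genuinely modelled by the collapse map $c$, so that the Mayer--Vietoris ladder commutes over the identity of $H_1(L)$, and confirming that the hypothesis map $\alpha\mapsto\alpha\cap L$ coincides with the connecting map $\partial_X$.
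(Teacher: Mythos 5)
Your proposal is correct and follows essentially the same route as the paper: the same decompositions $Y=X^o\cup_L M$ and $X=X^o\cup_L B$, the same commutative ladder of Mayer--Vietoris sequences (using $H_2(L)=H_2(M)=0$, contractibility of $B$, and the surjection $H_1(L)\simeq\bZ/n^2\bZ\to H_1(M)\simeq\bZ/n\bZ$), and the hypothesis that $H_2(X)\to H_1(L)$ is onto. The paper reads off injectivity and the cokernel $H_1(M)\simeq\bZ/n\bZ$ directly from the diagram where you invoke the snake lemma and make the kernels explicit, but this is only a difference in bookkeeping, not in substance.
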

\begin{proof}
Let $B \subset X$ be the intersection of $X$ with a small ball about $P$ in some embedding, and $X^o$ the complement of $B$.
Then $B$ is contractible and has boundary $L$, the link of $P \in X$.
Let $M \subset Y$ be the Milnor fiber of the smoothing of $P \in X$.
Then the Mayer--Vietoris sequences for $Y=X^o \cup M$ and $X=X^o \cup B$ give a commutative diagram with exact rows
$$
\begin{CD}
0 @>>> H_2(X^o) @>>> H_2(Y) @>>> H_1(L) @>>> H_1(X^o) \oplus H_1(M)\\
@. 	@| 	     @VVV 	   @|           @VVV\\
0 @>>> H_2(X^o) @>>> H_2(X) @>>> H_1(L) @>>> H_1(X^o) \\
\end{CD}
$$
using $H_2(L)=H_2(M)=0$ and contractibility of $B$. Now $H_2(X) \rightarrow H_1(L)$ is surjective by assumption and $H_1(L) \rightarrow H_1(M)$ is a surjection of the form $\bZ/n^2\bZ \rightarrow \bZ/n\bZ$.
It follows that $H_2(Y) \rightarrow H_2(X)$ is injective with cokernel $H_1(M)$ isomorphic to $\bZ/n\bZ$ as claimed.
\end{proof}

\begin{prop}\label{Gstable}
Let $W$ be the surface of Proposition~\ref{Wbdles} and $G$ an exceptional vector bundle on $W$.
Then $G$ is slope stable.
\end{prop}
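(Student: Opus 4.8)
The plan is to exploit that $W$ is a normal projective surface of Picard number one with anti-ample canonical class, so that it behaves like a del Pezzo surface of Picard rank one, for which exceptional bundles are known to be stable. By Lemma~\ref{classgroup} we have $\Num(W)_{\bQ}=\bQ\cdot H$ with $H$ ample, and the adjunction computation $K_W=-(a+n)H$ carried out in the proof of Proposition~\ref{Wbdles} shows $-K_W$ is ample, so in particular $K_W\cdot H<0$. Since $\Num(W)_{\bQ}$ is one-dimensional, the slope $\mu(F)=(c_1(F)\cdot H)/\rk(F)$ takes values in a single copy of $\bQ$ and linearly orders all sheaves, and twisting by $K_W$ strictly lowers slope, $\mu(F\otimes K_W)=\mu(F)+K_W\cdot H<\mu(F)$. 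Throughout I use that for $\mu$-semistable sheaves a nonzero homomorphism $A\to B$ forces $\mu(A)\le\mu(B)$, together with Serre duality $\Ext^2(A,B)\cong\Hom(B,A\otimes\omega_W)^{\vee}$, where $\omega_W$ is the dualizing sheaf; this is legitimate since $G$, and hence every sheaf built from it, is locally free away from the single quotient singularity of $W$, and I work with $\bQ$-Chern classes, which is harmless for the same reason.

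First I would prove that $G$ is $\mu$-semistable. Suppose not, and let $F\subset G$ be the maximal destabilizing subsheaf of the Harder--Narasimhan filtration and $Q=G/F$, so that $\mu_{\min}(F)=\mu(F)>\mu_{\max}(Q)$. The slope inequality immediately gives $\Hom(F,Q)=0$, and since $\mu_{\min}(F)>\mu_{\max}(Q)>\mu_{\max}(Q\otimes K_W)$ Serre duality gives $\Ext^2(Q,F)\cong\Hom(F,Q\otimes K_W)^{\vee}=0$. The extension $0\to F\to G\to Q\to 0$ is non-split, for otherwise $G=F\oplus Q$ would have $\dim\Hom(G,G)\ge 2$; hence its class $e\in\Ext^1(Q,F)$ is nonzero. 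The heart of the argument is then a diagram chase in the long exact sequences obtained by applying $\Hom(G,-)$, $\Hom(-,F)$ and $\Hom(-,Q)$ to this extension: the vanishings $\Hom(F,Q)=0$ and $\Ext^2(Q,F)=0$ let one transport the nonzero class $e$ to a nonzero element of $\Ext^1(G,G)$, contradicting $\Ext^1(G,G)=0$. This contradiction proves semistability.

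Finally I would upgrade semistability to stability. If $G$ were strictly $\mu$-semistable, choose a $\mu$-stable saturated subsheaf $B\subset G$ with $\mu(B)=\mu(G)$, giving $0\to B\to G\to Q\to 0$ with $Q$ semistable of the same slope. All relevant $\Ext^2$ groups between slope-$\mu(G)$ semistable sheaves vanish by the same Serre-duality computation (here the inequality $\mu(G)>\mu(G)+K_W\cdot H$ is exactly where $-K_W$ ample is used), so the Euler pairing on the $\mu$-stable Jordan--Hölder factors of slope $\mu(G)$ is $\dim\Hom-\dim\Ext^1$ and equals the identity matrix minus a nonnegative matrix. Combining the exceptional normalization $\chi(G,G)=1$ with $\Hom(G,G)=\bC$ and $\Ext^1(G,G)=0$ then pins the Jordan--Hölder length to one, so $G$ is in fact $\mu$-stable. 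I expect the main obstacle to be the homological bookkeeping in the two filtration arguments: in the semistability step, genuinely producing a nonzero \emph{self}-extension of $G$ from $e$ (rather than merely self-extensions of the factors), and in the stability step, controlling the equal-slope case where the naive vanishing $\Hom(F,Q)=0$ is no longer available, so that one must run the contradiction purely through the Euler form and the brick condition $\dim\Hom(G,G)=1$. Everything else is the standard dictionary between semistability and $\Hom$-vanishing together with Serre duality on the anti-canonically polarized surface $W$.
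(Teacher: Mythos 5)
Your overall framing (Picard rank one, $-K_W$ ample, Hom/Ext vanishing via slopes and Serre duality) is reasonable, but both pivotal steps have genuine gaps. In the semistability step, the claimed diagram chase does not exist as described: for the extension $0 \to F \xrightarrow{\iota} G \xrightarrow{\pi} Q \to 0$ with class $e \in \Ext^1(Q,F)$, the only natural maps toward $\Ext^1(G,G)$ are $\pi^* \colon \Ext^1(Q,F) \to \Ext^1(G,F)$ and $\iota_* \colon \Ext^1(Q,F) \to \Ext^1(Q,G)$, and \emph{both kill $e$}: the connecting maps $\Hom(F,F) \to \Ext^1(Q,F)$ and $\Hom(Q,Q) \to \Ext^1(Q,F)$ send $\id_F$ and $\id_Q$ to $e$, i.e.\ the pullback of an extension along its own quotient map (and its pushout along its own inclusion) splits. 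Running the long exact sequences with your two vanishings $\Hom(F,Q)=0$ and $\Ext^2(Q,F)=0$ yields only consequences such as $\Ext^1(F,F)=\Ext^1(Q,Q)=0$, not a contradiction. That exceptional sheaves are slope stable with respect to $-K$ on del Pezzo surfaces is a real theorem of Kuleshov--Orlov \cite{KO95} with a substantially more involved proof, and it is not quotable off the shelf here since $W$ is singular (it has a quotient singularity of type $\frac{1}{na-1}(1,a^2)$, and $\omega_W$ is not invertible); for the same reason your uses of Serre duality in the form $\Ext^2(Q,F) \cong \Hom(F, Q \otimes K_W)^{\vee}$ for sheaves that need not be locally free at the singular point, and of ``twisting by $K_W$ preserves semistability,'' require justification that you do not give.

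The stability upgrade has the same defect, which you flag but do not resolve: the Euler form plus the brick condition genuinely cannot pin the Jordan--H\"older length to one. Concretely, suppose $G$ were a nonsplit extension $0 \to A_1 \to G \to A_2 \to 0$ of non-isomorphic stable sheaves of equal slope with $\hom(A_i,A_j)=\delta_{ij}$, all $\Ext^2$'s zero, $\Ext^1(A_i,A_i)=\Ext^1(A_1,A_2)=0$ and $\dim \Ext^1(A_2,A_1)=1$; the long exact sequences then give $\Hom(G,G)=\bC$, $\Ext^1(G,G)=\Ext^2(G,G)=0$ and $\chi(G,G)=1+1+0-1=1$, so this strictly semistable $G$ is indistinguishable from an exceptional stable bundle by every invariant your argument uses. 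Ruling this out needs geometric input beyond the bookkeeping. The paper's proof is quite different and sidesteps all of these issues at once: it restricts to a general member $D \in |-K_W|$, a rational nodal curve of arithmetic genus one, and uses the sequence $0 \to \cEnd G(-D) \to \cEnd G \to \cEnd G|_D \to 0$ together with exceptionality and Serre duality applied only to the \emph{vector bundle} $\cEnd G$ (so no singular duality subtleties arise) to conclude $H^0(\cEnd G|_D)=\bC$; then \cite[4.13]{BK06} says a simple bundle on such a curve is slope stable, and since $b_2(W)=1$ (Lemma~\ref{classgroup}) stability of $G|_D$ forces stability of $G$. To salvage your outline you would essentially have to redo the Kuleshov--Orlov argument on the orbifold $W$; as written, both the transport of $e$ and the equal-slope case are gaps.
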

\begin{proof}
Note first that the choice of polarization is irrelevant because $b_2(W)=1$, see Lemma~\ref{classgroup}.
Let $D \in |-K_W|$ be a general member of the anticanonical linear system on $W$.
Then $D$ is a rational nodal curve of arithmetic genus one.
Consider the exact sequence
$$0 \rightarrow \cEnd G(-D) \rightarrow \cEnd G \rightarrow \cEnd G|_D \rightarrow 0.$$
We have $H^i(\cEnd G)=0$ for $i \neq 0$ because $G$ is exceptional, and
$$H^i(\cEnd G(-D))=H^i(\cEnd G(K_W))=H^{2-i}(\cEnd G)^* = 0$$
for $i \neq 2$ by Serre duality.
Thus $H^0(\cEnd G|_D)=H^0(\cEnd G) = \bC$, that is, $G|_D$ is simple.
Now by \cite[4.13]{BK06} $G|_D$ is slope stable. It follows that $G$ is slope stable.
\end{proof}

\begin{proof}[Proof of Theorem~\ref{mainthmintro}]
By Propositions~\ref{glueing} and \ref{Wbdles}, to establish the existence of the sheaf $\cE$ satisfying \ref{mainthmintro}(2)(a,b) it suffices to show that the hypotheses of Theorem~\ref{mainthmintro} imply the hypothesis (1) of \S4, namely, that the map
\begin{equation}\label{restriction}
H_2(X,\bZ) \rightarrow H_1(L,\bZ)
\end{equation}
is surjective, where $L$ is the link of the singularity.

Let $B$ denote the intersection of $X$ with a small ball centered at the singularity $P \in X$ in some embedding, $M$ the Milnor fiber of the smoothing, and write $X^o:=X \setminus B$.
As in the proof of Lemma~\ref{MV}, consider the Mayer-Vietoris sequences for $Y=X^o \cup M$ and $X=X^o \cup B$.
Let $I_X$ and $I_Y$ denote the image of $H_2(X)$ and $H_2(Y)$ in $H_1(L)$.
We obtain a commutative diagram with exact rows
$$
\begin{CD}
0 @>>> H_1(L)/I_Y @>>> H_1(X^o) \oplus H_1(M) @>>> H_1(Y) @>>> 0\\
@. 	     @VVV 	       @VVV         @VVV\\
0 @>>> H_1(L)/I_X @>>> H_1(X^o) @>>> H_1(X) @>>> 0\\
\end{CD}
$$
By the snake lemma we obtain an exact sequence
$$0 \rightarrow I_X/I_Y \rightarrow H_1(M) \rightarrow H_1(Y) \rightarrow H_1(X) \rightarrow 0.$$
Now $H_1(M) \simeq \bZ/n\bZ$ and $H_1(Y)$ is finite of order coprime to $n$ by assumption.
Hence $I_X/I_Y=H_1(M)$ and $H_1(Y)=H_1(X)$.
Now consider the $p$-part of the exact sequence
$$H_1(L) \rightarrow H_1(X^o) \oplus H_1(M) \rightarrow H_1(Y) \rightarrow 0$$
for $p$ a prime.
If $p$ divides $n$, then since $H_1(L) \simeq \bZ/n^2\bZ$, $H_1(M) \simeq \bZ/n\bZ$, and $H_1(Y)_{(p)}=0$ we find that $H_1(X^o)_{(p)}=0$.
If $p$ does not divide $n$, then $H_1(X^o)_{(p)} = H_1(Y)_{(p)}$. Thus $H_1(X^o)=H_1(Y)$. Combining, $H_1(X^o)=H_1(X)$.
Now the Mayer--Vietoris sequence for $X=X^o \cup B$ shows that $H_2(X) \rightarrow H_1(L)$ is surjective as required.

The statement \ref{mainthmintro}(1) holds by Lemma~\ref{MV} and the surjectivity of (\ref{restriction}) proved above.
The stability statement is given by Proposition~\ref{stability}.

It remains to establish the stated properties of the Chern classes of $F$.
We have $c_1(E)=nc_1(A) \in H_2(X,\bZ)$ because $E^{\vee\vee}=A^{\oplus n}$ and $E$ is torsion-free, and $c_1(F)=c_1(E) \in H_2(Y,\bZ) \subset H_2(X,\bZ)$ by flatness of $\cE$. The formula for $c_2(F)$ holds because $F$ is exceptional, see Remark~\ref{RR}.
Since $c_1(F)=nc_1(A)$ and $K_Y=K_X$ in $H_2(Y,\bZ) \subset H_2(X,\bZ)$ we can compute $c_1(F) \cdot K_Y$ modulo $n$ by a local computation at the singular point $P \in X$. Identify $(P \in X)$ with  $(0 \in \bC^2_{u,v}/\frac{1}{n^2}(1,na-1))$. By construction $c_1(A)$ is locally represented by the class of the Weil divisor $D:=(v=0)$ (because the strict transform $D'$ of this divisor in $\tilde{X}_1$ is Cartier and satisfies $D' \cdot C = 1$). The canonical divisor $K_X$ is locally represented by $-(uv=0) \sim -na(u=0)$. Thus the local intersection number $(nc_1(A) \cdot K_X)_P$ is given by
$$(nc_1(A) \cdot K_X)_P = n(v=0)\cdot(-na(u=0)) = -n^2a/n^2 = -a \bmod n.$$
It follows that $c_1(A)=c_1(F)/n$ generates $H_2(X,\bZ)/H_2(Y,\bZ) \simeq \bZ/n\bZ$ because $(a,n)=1$.
\end{proof}

\section{Example: The projective plane}\label{P2}

We analyze our construction in the case $Y=\bP^2$. We use the classification of exceptional bundles on $Y$ \cite{DLP85},\cite{R89} and the classification of degenerations $Y \leadsto X$ \cite{HP10} to establish a bijective correspondence, see Theorem~\ref{P2thm}.

\begin{thm}\label{manetti}\cite[1.2]{HP10}
Let $X$ be a normal surface with quotient singularities which admits a smoothing to the projective plane.
Then $X$ is one of the following:
\begin{enumerate}
\item A weighted projective plane $\bP(a^2,b^2,c^2)$, where $(a,b,c)$ is a solution of the Markov equation
$$a^2+b^2+c^2=3abc.$$
\item A $\bQ$-Gorenstein deformation of one of the toric surfaces in (1), determined by specifying a subset of the singularities to be smoothed.
\end{enumerate}
\end{thm}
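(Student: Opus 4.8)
The plan is to combine the numerical rigidity of $\bQ$-Gorenstein smoothings with the classification of rank-one log del Pezzo surfaces carrying only Wahl singularities. As a first step I would record that such an $X$ is a log del Pezzo surface of Picard rank one, with only Wahl singularities, and with $K_X^2 = 9$. The degree is preserved under $\bQ$-Gorenstein deformation, so $K_X^2 = K_{\bP^2}^2 = 9$. Since the Milnor fiber of a Wahl smoothing is a rational homology ball, the argument of Lemma~\ref{MV} shows the specialization map $H_*(\bP^2,\bQ) \to H_*(X,\bQ)$ is an isomorphism; hence $b_2(X) = 1$, and as $X$ is rational with $q = 0$ it has Picard rank one. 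A singularity that is actually smoothed must be of Wahl type: any other $\bQ$-Gorenstein smoothable quotient singularity has a Milnor fiber with nonzero second Betti number (or the wrong $\pi_1$), contradicting that the total space specializes from the simply connected surface $\bP^2$ with $b_2 = 1$. Finally $-K_X$ is ample, since $-K_{\bP^2}$ is and on a rank-one surface this positivity is stable under specialization.

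Next I would pass to a maximal degeneration. Any such $X$ admits a $\bQ$-Gorenstein degeneration to a surface $X_0$ of the same type which is \emph{rigid}, i.e. has no equisingular (locally trivial $\bQ$-Gorenstein) deformations, and $X$ is recovered from $X_0$ by $\bQ$-Gorenstein smoothing a subset of the singular points of $X_0$. Granted the identification of $X_0$ in the next step, this is exactly the partial-smoothing description in part (2) of the statement. The reduction rests on the fact that the $\bQ$-Gorenstein deformations of these del Pezzos are unobstructed (the local model is the smooth versal base of \S\ref{Wahl}), so that the boundary stratification of the moduli space by which singularities persist is well behaved and possesses zero-dimensional deepest strata, the rigid $X_0$. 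If $X$ is itself rigid it \emph{is} such an $X_0$, giving case (1); otherwise it sits over one, giving case (2).

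The crux is then to identify the rigid surfaces. A toric log del Pezzo surface of Picard rank one is a weighted projective plane $\bP(w_0,w_1,w_2)$, and requiring the three torus-fixed cyclic quotient singularities $\frac{1}{w_i}(w_j,w_k)$ to be of Wahl type $\frac{1}{n^2}(1,na-1)$ forces each weight to be a perfect square, $w_i = a_i^2$, since the local group has order $w_i = n^2$. For such a surface the self-intersection of the anticanonical class is
\[
K^2 = \frac{(a_0^2+a_1^2+a_2^2)^2}{a_0^2 a_1^2 a_2^2},
\]
so the condition $K_X^2 = 9$ from the first step is equivalent to $a_0^2+a_1^2+a_2^2 = 3\,a_0 a_1 a_2$, the Markov equation. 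This yields case (1), and combined with the reduction above it yields case (2).

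The genuinely hard step is showing that a rigid $X_0$ must be toric. I would attack this through the minimal resolution $g \colon \tilde{X}_0 \to X_0$, a smooth rational surface whose exceptional locus over each Wahl point is a Hirzebruch--Jung chain of rational curves, with self-intersections encoded by the continued fraction expansion of $\frac{1}{n^2}(1,na-1)$. Running the minimal model program on $\tilde{X}_0$ and bookkeeping with Noether's formula $K_{\tilde{X}_0}^2 + e(\tilde{X}_0) = 12$ constrains the configuration of negative curves; the rigidity of $X_0$ forbids any configuration carrying moduli, forcing the dual graph to be that of a toric surface and hence $X_0 \cong \bP(a_0^2,a_1^2,a_2^2)$. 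Controlling this combinatorics — essentially an induction on the lengths of the resolution chains that mirrors the descent of Markov triples under the moves $c \mapsto 3ab - c$ — is where the real difficulty lies, and it is the step I expect to be the main obstacle.
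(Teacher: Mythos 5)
First, note that the paper does not prove Theorem~\ref{manetti} at all: it is quoted from \cite[1.2]{HP10}, and in the case at hand it is originally due to Manetti \cite{M91}. So the comparison is with those proofs. Your outline reproduces the general shape of Manetti's surface-geometric argument (Picard rank one, $K_X^2=9$, Wahl singularities, reduction to a ``most degenerate'' surface, identification of that surface as a Markov weighted projective plane), and is quite different in spirit from the proof in \cite{HP10}, which runs the three-fold MMP on the total space $\cX/T$ of the smoothing (a three-fold with terminal cyclic quotient singularities) to produce the toric degeneration directly. The parts of your argument that are complete are correct and standard: $b_2(X)=1$ via the rational-homology-ball property of the Milnor fibers (as in Lemma~\ref{MV}), constancy of $K^2$ in a $\bQ$-Gorenstein family, and the computation showing $K^2=9$ for $\bP(a_0^2,a_1^2,a_2^2)$ is equivalent to the Markov equation.

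However, there are genuine gaps at exactly the load-bearing points. (i) You assume throughout that the smoothing is $\bQ$-Gorenstein, but the hypothesis is only that $X$ admits \emph{some} smoothing to $\bP^2$; that such a smoothing is automatically $\bQ$-Gorenstein is a nontrivial theorem of Manetti, which this paper itself must cite in the proof of Theorem~\ref{P2thm}. Without it you cannot invoke the Koll\'ar--Shepherd-Barron classification \cite{KSB88}, and ``Milnor fiber a rational homology ball'' alone does not force cyclic Wahl type: there exist non-cyclic quotient singularities with rational-homology-ball smoothings (already in \cite{W81}), and your parenthetical ``(or the wrong $\pi_1$)'' does not dispose of them. (ii) The reduction to a rigid $X_0$ via ``deepest strata of the boundary stratification'' is unjustified: the relevant moduli problem is not proper, so nothing guarantees that $X$ admits any $\bQ$-Gorenstein degeneration within the class, let alone one terminating at a rigid surface of which $X$ is the asserted partial smoothing --- but that existence statement is precisely the content of part (2) of the theorem. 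This is the step the MMP of \cite{HP10} (or Manetti's induction on resolution graphs, mirroring Markov mutation) is designed to supply, and your proposal constructs no degeneration at all. (iii) In the toric identification, ``a toric log del Pezzo of Picard rank one is a weighted projective plane'' is false as stated: there are fake weighted projective planes $\bP(w_0,w_1,w_2)/G$, and excluding them requires control of $H_1$ of the smooth locus via specialization from the simply connected $\bP^2$; moreover Wahl type at the three corners is more than the weights being squares --- one needs the type to be $\frac{1}{n^2}(1,na-1)$, which for $\bP(a^2,b^2,c^2)$ is verified \emph{using} the Markov equation, as the paper notes in \S\ref{P2}. Since you yourself flag the rigid-implies-toric step as open, the proposal should be read as a plausible roadmap toward Manetti's proof, not as a proof: both hard steps (ii) and (iii) are missing.
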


The solutions of the Markov equation are easily described: $(1,1,1)$ is a solution, and all solutions are obtained from $(1,1,1)$ by a sequence of \emph{mutations} of the form
\begin{equation}\label{mutation}
(a,b,c) \mapsto (a,b,c'=3ab-c).
\end{equation}
We can define a graph $G$ with vertices labelled by solutions of the Markov equation and edges corresponding to pairs of solutions related by a single mutation. Then $G$ is an infinite tree such that every vertex has degree $3$. See \cite[II.3]{C57}.

The surface $\bP=\bP(a^2,b^2,c^2)$ has cyclic quotient singularities of types $\frac{1}{a^2}(b^2,c^2)$, $\frac{1}{b^2}(c^2,a^2)$, $\frac{1}{c^2}(a^2,b^2)$.
Using the Markov equation one sees that these are Wahl singularities (note that $a,b,c$ are coprime and not divisible by $3$ by the inductive description of the solutions of the Markov equation above).
Moreover there are no locally trivial deformations and no local-to-global obstructions because $H^1(T_{\bP})=H^2(T_{\bP})=0$.
Thus the versal $\bQ$-Gorenstein deformation space of $\bP$ maps isomorphically to the product of the versal $\bQ$-Gorenstein deformation spaces of its singularities, which are smooth of dimension $1$ (see \S2).

\begin{prop}\label{localunique}
Let $X$ be a normal surface with quotient singularities which admits a smoothing to the projective plane.
Then $X$ is uniquely determined by its singularities.
\end{prop}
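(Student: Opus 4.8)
The plan is to combine the classification of Theorem~\ref{manetti} with the mutation combinatorics of the Markov tree. By Theorem~\ref{manetti}, every such $X$ is a $\bQ$-Gorenstein partial smoothing of a weighted projective plane $\bP(a^2,b^2,c^2)$ attached to a Markov triple $(a,b,c)$, the smoothing being specified by a subset $S$ of the three singularities. Since the versal $\bQ$-Gorenstein deformation space of $\bP(a^2,b^2,c^2)$ is smooth and splits as the product of the (smooth, one-dimensional) local deformation spaces of its singularities (there being no locally trivial deformations and no local-to-global obstructions, as $H^1(T_{\bP})=H^2(T_{\bP})=0$), the pair $((a,b,c),S)$ determines $X$ up to isomorphism, and the singularities of $X$ are precisely the Wahl singularities of $\bP(a^2,b^2,c^2)$ indexed by the complement of $S$. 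One implication being trivial, it remains to reconstruct $X$ from the multiset of its singularity types.

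First I would record the effect of a mutation on the \emph{retained} singularities. Mutating the triple in the third coordinate replaces $c$ by $c'=3ab-c$, and the surfaces $\bP(a^2,b^2,c^2)$ and $\bP(a^2,b^2,c'^2)$ share the singularities of types $\frac{1}{a^2}(b^2,c^2)$ and $\frac{1}{b^2}(c^2,a^2)$. Using $cc'=a^2+b^2$ (the product of the roots of $t^2-3ab\,t+(a^2+b^2)$) one gets $(cc')^2\equiv b^4 \pmod{a^2}$, hence $c'^2b^{-2}\equiv (c^2b^{-2})^{-1}\pmod{a^2}$; together with the isomorphism $\frac{1}{m^2}(1,q)\simeq\frac{1}{m^2}(1,q^{-1})$ coming from interchanging coordinates, this shows the type $\frac{1}{a^2}(b^2,c^2)$ is unchanged by the mutation, and similarly for the type at the $b$-point. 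Thus mutating a coordinate lying in $S$ (a smoothed singularity) leaves the retained singularities of $X$ unaffected. Since the triples realizing a fixed set of retained singularities are connected by mutations of the smoothed coordinates (for two retained points only the pair $(a,b,c),(a,b,c')$ occurs; for one retained point $a$ this is the connectedness of the set of triples containing $a$ in the Markov tree, cf.~\cite{C57}), the retained types determine $((a,b,c),S)$ up to such mutations.

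The heart of the matter is to promote this type-level invariance to an isomorphism of surfaces: smoothing the $c$-singularity of $\bP(a^2,b^2,c^2)$ should yield a surface isomorphic to that obtained by smoothing the $c'$-singularity of $\bP(a^2,b^2,c'^2)$. Granting this, the isomorphism class of $X$ depends only on the retained types, and the statement follows after a short case analysis on the size of the complement of $S$: for three retained singularities the triple is read off from the orders $a^2,b^2,c^2$ of the local fundamental groups; for none, $X=\bP^2$; and for one or two retained singularities the mutation-invariance collapses the infinitely many (respectively two) descriptions to a single surface per configuration.

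I expect the main obstacle to be exactly this geometric mutation-invariance, i.e. that two partial smoothings related by a Markov mutation of a smoothed coordinate are genuinely isomorphic rather than merely sharing singularity types. I would attack it by deformation theory: these $X$ are log del Pezzo surfaces of Picard rank one with Wahl singularities and $K_X^2=9$, and I would show they admit no locally trivial deformations ($H^1(T_X)=0$) while their $\bQ$-Gorenstein deformations are unobstructed, so that inside the connected total family smoothing to $\bP^2$ the stratum carrying a prescribed singularity configuration is a single isomorphism class. A more explicit alternative is to realize the two smoothings as fibers of one birational cobordism built from the weighted blowup of Proposition~\ref{blowup}, with the mutation appearing as the elementary transformation exchanging the two weighted projective planes; this makes the isomorphism transparent but requires careful bookkeeping in the orbifold charts.
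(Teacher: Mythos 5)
Your skeleton (classification via Theorem~\ref{manetti}, reduction to the data of a Markov triple plus a subset of smoothed points, geometric mutation-invariance, then a combinatorial uniqueness step) matches the paper's proof up to the last step, and your worry about the geometric mutation-invariance is misplaced: the paper handles it exactly by your ``explicit alternative,'' namely the two-parameter family of Example~\ref{connect}, $(XY=sZ^{c'}+tT^c)\subset\bP(a^2,b^2,c,c')\times\bC^2_{s,t}$, whose fibers exhibit the partial smoothing of $\bP(a^2,b^2,c^2)$ at the $c$-point and of $\bP(a^2,b^2,c'^2)$ at the $c'$-point as the same surface. The genuine gap is in your combinatorial step for the case of \emph{one} retained singularity. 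Your justification --- ``the connectedness of the set of triples containing $a$ in the Markov tree, cf.~\cite{C57}'' --- is not a theorem: it is equivalent to the Markov uniqueness conjecture, which is open. Indeed, the set of triples containing a fixed $n$, with adjacency given by mutations of the other two coordinates, is a disjoint union of paths; descent along such a path (mutating the maximal entry strictly decreases it, mutating a non-maximal entry strictly increases it) terminates precisely at a triple in which $n$ is the maximum, so the connected components of this set are in bijection with the Markov triples having maximum $n$. Connectedness is therefore exactly the assertion that $n$ is the maximum of a unique triple. Cassels proves only that the full solution tree is connected and trivalent, not this; so your argument, as written, rests on an open conjecture.

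The paper circumvents this by using strictly more information than the index: the singularity type $\frac{1}{n^2}(1,na-1)$ records not just $n=a_1$ but also the residue $\pm a\bmod n$, and the computation (\ref{Markovslope}) identifies $\pm a\equiv 3a_2^{-1}a_3 \bmod n$. After normalizing by mutations at the smoothed coordinates (legitimate by Example~\ref{connect}) so that $a_1=\max(a_1,a_2,a_3)$, the paper invokes Rudakov's theorem \cite[3.2]{R89}: a Markov triple is uniquely determined by its maximum $n$ \emph{together with} the residue $\pm a\bmod n$. This finer uniqueness statement (itself proved via exceptional bundles on $\bP^2$) is the essential input, and it cannot be weakened to uniqueness from $n$ alone without resolving the uniqueness conjecture. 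To repair your proof, replace the connectedness claim by this appeal to \cite[3.2]{R89}, observing that the retained singularity type supplies exactly the pair $(n,\pm a\bmod n)$ that Rudakov's theorem requires; your computation via $cc'=a^2+b^2$ that mutation at a smoothed coordinate preserves the retained types then fits cleanly into this corrected scheme, and the cases $r=0,2,3$ go through as you describe.
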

\begin{proof}
If $X$ is smooth then necessarily $X$ is isomorphic to $\bP^2$.
Now suppose $X$ has $r$ singularities with indices $a_1,\ldots,a_r$.
By Theorem~\ref{manetti} we have $r \le 3$ and the surface $X$ is obtained from a weighted projective plane $\bP=\bP(a_1^2,a_2^2,a_3^2)$ by smoothing the singularity of index $a_i$ for each $i>r$,
where $(a_1,a_2,a_3)$ is a solution of the Markov equation.
If $r=3$ then $X=\bP$ and clearly $X$ is determined by its singularities.
If $r=2$ then there are exactly two possibilities for $a_3$, related by the mutation $a'_3=3a_1a_2-a_3$.
By Example~\ref{connect} below these two choices yield isomorphic surfaces.

Finally suppose $r=1$.
Let $\frac{1}{n^2}(1,na-1)$ be the isomorphism type of the singularity $P \in X$.
Thus $n=a_1$ and $\frac{1}{n^2}(1,na-1) \simeq  \frac{1}{a_1^2}(a_2^2,a_3^2)$. Equivalently, using the Markov equation,
\begin{equation} \label{Markovslope}
\pm a= ((a_2^2+a_3^2)/a_1) \cdot (a_2^2)^{-1} = (3a_2a_3-a_1) \cdot (a_2^2)^{-1} = 3a_2^{-1}a_3 \bmod n
\end{equation}
(the sign ambiguity comes from interchanging the orbifold coordinates).
By inductively replacing $(a_1,a_2,a_3)$ by a mutation at $a_2$ or $a_3$ (and appealing to Example~\ref{connect} again), we may assume that $a_1= \max(a_1,a_2,a_3)$, cf. \cite[p.~27]{C57}.
Now by \cite[3.2]{R89} $(a_1,a_2,a_3)$ is uniquely determined by $n$ and $\pm a \bmod n$.
\end{proof}

\begin{ex}\label{connect}
Here we describe a two parameter family of surfaces which ``connects'' the weighted projective planes $\bP:=\bP(a^2,b^2,c^2)$, $\bP':=\bP(a^2,b^2,{c'}^2)$ associated to two solutions of the Markov equation related by a single mutation.
The family is given by
$$\cX=(XY=sZ^{c'}+tT^c) \subset \bP(a^2,b^2,c,c') \times \bC^2_{s,t}.$$
The special fibre $X:=\cX_0$ is the union of two weighted projective planes $\bP(a^2,c,c')$, $\bP(b^2,c,c')$ glued along the coordinate lines of degree $a^2$ and $b^2$. It has two Wahl singularities of indices $a$ and $b$ and two orbifold normal crossing singularities of indices $c$ and $c'$.
The fibers $\cX_{s,t}$ for $s = 0$, $t \neq 0$ are isomorphic to $\bP=\bP(a^2,b^2,c^2)$, with the embedding being the $c$-uple embedding
$$\bP(a^2,b^2,c^2) \rightarrow (XY=T^c) \subset \bP(a^2,b^2,c,c')$$
$$(U , V , W) \mapsto (X , Y , Z, T) = (U^{c},V^{c},W,UV).$$
Similarly, the fibers $\cX_{s,t}$ for $s \neq 0$, $t=0$ are isomorphic to $\bP'=\bP(a^2,b^2,{c'}^2)$.
The fibers $\cX_{s,t}$ for $s \neq 0$, $t \neq 0$ are obtained from $\bP$ or $\bP'$ by smoothing the singularity of index $c$ or $c'$ respectively.
\end{ex}

\begin{thm}\label{P2thm}
Let $S$ denote the set of isomorphism classes of normal surfaces $X$ such that $X$ has a unique singular point $P \in X$ which is a quotient singularity and $X$ admits a smoothing to $\bP^2$. (Then $P \in X$ is a Wahl singularity and the smoothing is necessarily $\bQ$-Gorenstein.)

Let $T$ denote the set of isomorphism classes of exceptional vector bundles $F$ on $\bP^2$ of rank greater than $1$ modulo the operations $F \mapsto F^{\vee}$
and $F \mapsto F \otimes L$ for $L$ a line bundle on $\bP^2$.

Then Theorem~\ref{mainthmintro} defines a bijection of sets
$$\Phi \colon S \rightarrow T, \quad [X] \mapsto [F].$$
\end{thm}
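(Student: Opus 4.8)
The plan is to show that both $S$ and $T$ are classified by the same discrete datum — the pair consisting of the index $n$ of the singularity (equivalently the rank of the bundle) and the residue $\pm a \bmod n$ — and that $\Phi$ is compatible with these two classifications; injectivity and surjectivity then follow from the uniqueness theorems available on each side, namely Proposition~\ref{localunique} for $S$ and the Drézet--Le Potier \cite{DLP85} and Rudakov \cite{R89} classification for $T$. First I would record the invariants attached to $\Phi([X])=[F]$. If $P \in X$ has type $\frac{1}{n^2}(1,na-1)$ then by Theorem~\ref{mainthmintro} the bundle $F$ has rank $n$ and satisfies $c_1(F)\cdot K_Y \equiv \pm a \bmod n$. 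Since $Y=\bP^2$ has $\Pic \bP^2 = \bZ\cdot H$ with $H^2=1$ and $K_Y=-3H$, writing $c_1(F)=dH$ gives $-3d \equiv \pm a \bmod n$; as $n$ is coprime to $3$ (Markov numbers are not divisible by $3$), the residues $\pm d \bmod n$ and $\pm a \bmod n$ determine each other. The slope $\mu(F)=d/n$ transforms by $\mu \mapsto \mu+1$ under $F \mapsto F \otimes \cO(1)$ and by $\mu \mapsto -\mu$ under $F \mapsto F^{\vee}$, so the class $[F]\in T$ records exactly the datum $(n,\pm d \bmod n)$, equivalently $(n,\pm a \bmod n)$. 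Because an exceptional bundle on $\bP^2$ is determined up to isomorphism by its rank and first Chern class \cite{DLP85} (with $c_2$ then forced by Remark~\ref{RR}), this both shows that $\Phi$ is well defined, independently of the choices in the construction of Theorem~\ref{mainthmintro}, and that $[F]$ depends only on $(n,\pm a \bmod n)$.

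For injectivity, suppose $\Phi([X_1])=\Phi([X_2])$. Then $X_1$ and $X_2$ have singularities of the same index $n$ and the same residue $\pm a \bmod n$, hence the same singularity type $\frac{1}{n^2}(1,na-1)$ (recall that $\frac{1}{n^2}(1,na-1) \cong \frac{1}{n^2}(1,na'-1)$ precisely when $a' \equiv \pm a \bmod n$). By Proposition~\ref{localunique} a surface in $S$ is determined by its singular point, so $X_1 \cong X_2$.

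For surjectivity, fix $[F]\in T$ of rank $n>1$ and let $\pm a \bmod n$ be the residue read off from its slope as above. The rank of an exceptional bundle on $\bP^2$ is a Markov number, so $n$ occurs as the maximal entry of a Markov triple, and the matching of the mutation tree of exceptional bundles with the Markov tree \cite{DLP85},\cite{R89} attaches to $F$ a triple $(a_1,a_2,a_3)$ with $a_1=n$ maximal and $\pm a \equiv 3a_2^{-1}a_3 \bmod n$, cf.\ \eqref{Markovslope}. By Theorem~\ref{manetti}, smoothing the two singularities of indices $a_2,a_3$ of the weighted projective plane $\bP(n^2,a_2^2,a_3^2)$ produces a surface $X\in S$ whose unique singularity has type $\frac{1}{n^2}(1,na-1)$. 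Then $\Phi([X])$ is an exceptional bundle with invariants $(n,\pm a \bmod n)$, hence equal to $[F]$ in $T$ by the uniqueness recalled in the first paragraph.

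The main obstacle is the bookkeeping that reconciles the two parametrizations: the Markov-triple description of $S$ (via Theorem~\ref{manetti} and Proposition~\ref{localunique}) and the slope description of $T$ (via \cite{DLP85},\cite{R89}). Concretely, the crux is verifying that $(n,\pm a \bmod n)$ is simultaneously a complete invariant on both sides and that the realization step in surjectivity lands in the single-singularity locus of $S$; this is exactly where Rudakov's uniqueness statement \cite[3.2]{R89} for Markov triples with prescribed maximal entry and slope, together with the slope computation \eqref{Markovslope}, does the essential work.
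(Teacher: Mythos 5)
Your proposal is correct and follows essentially the same route as the paper: well-definedness of $\Phi$ via the invariant $(n,\pm a \bmod n)$ together with the Dr\'ezet--Le Potier uniqueness of an exceptional bundle on $\bP^2$ with given slope, injectivity via Proposition~\ref{localunique}, and surjectivity via Rudakov's realization \cite[3.2]{R89} of any exceptional bundle's rank and slope by a Markov triple combined with Theorem~\ref{manetti} and the computations \eqref{Markovslope}, \eqref{Markovslope2}. The only cosmetic differences (making explicit that $3$ is invertible mod $n$ since Markov numbers are coprime to $3$, and normalizing the triple so $a_1=n$ is maximal, which the paper needs only for Proposition~\ref{localunique}, not for surjectivity) do not change the argument.
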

\begin{proof}
Let $X$ be a surface as in the statement. The singularity $P \in X$ is a Wahl singularity by Theorem~\ref{manetti}.
Let $P \in X$ be of type $\frac{1}{n^2}(1,na-1)$.

The smoothing of $X$ to $\bP^2$ is automatically $\bQ$-Gorenstein by \cite{M91}, \S1, Corollary~5.
Let $F$ denote an associated exceptional bundle $F$ on $Y=\bP^2$ given by Theorem~\ref{mainthmintro}.
Then $\rk(F)=n$ and $c_1(F) \cdot K_Y = \pm a \bmod n$.
Let $H$ denote the hyperplane class on $\bP^2$.
Then
\begin{equation}\label{Markovslope2}
3(c_1(F) \cdot H) = \pm a \bmod n,
\end{equation} 
and the slope $\mu(F):=(c_1(F) \cdot H)/\rk(F) \in \bQ$ is uniquely determined modulo translation by $\bZ$ and multiplication by $\pm 1$.
An exceptional vector bundle on $\bP^2$ is uniquely determined by its slope \cite[4.3]{DLP85}.
It follows that $F$ is uniquely determined up to $F \mapsto F \otimes L$ and $F \mapsto F^{\vee}$.
Thus the map $\Phi$ is well defined.

By Proposition~\ref{localunique} the surface $X$ is uniquely determined by the isomorphism type of its singularity, which is given by $n$ and $\pm a \mod n$.
This data is determined by $[F] \in T$ as above, so $\Phi$ is injective.
If $F$ is an exceptional vector bundle on $\bP^2$, then there exists a Markov triple $(a_1,a_2,a_3)$ such that
$\rk(F)=a_1$ and $(c_1(F)\cdot H)=\pm a_2^{-1}a_3 \bmod a_1$ \cite[3.2]{R89}.
Let $X$ be the surface obtained from $\bP(a_1^2,a_2^2,a_3^2)$ by smoothing the singularities of index $a_2$ and $a_3$.
Then by (\ref{Markovslope2}) and (\ref{Markovslope}) we have $[F]=\Phi([X])$. So $\Phi$ is surjective.
\end{proof}

\begin{rem}
An sequence of exceptional bundles $F_1,\ldots,F_N$ on a surface $Y$ is called an \emph{exceptional collection} if for all $j>k$ we have $\Ext^i(F_j,F_k)=0$ for each $i$. Under certain hypotheses, given a degeneration $Y \leadsto X$ of a smooth surface $Y$ to a normal surface $X$ with several Wahl singularities, we can construct an exceptional bundle $F_i$ on $Y$ associated to each singularity $P_i \in X$ such that the $F_i$ form an exceptional collection.
(Roughly speaking, we require $H^{2,0}(Y)=H^1(Y)=0$, (\ref{classgplocalglobal}) holds, and the singularities $P_i \in X$ are connected by a nodal chain of smooth rational curves.)
In the case $Y=\bP^2$ this yields a bijective correspondence between degenerations $X$ with several Wahl singularities and exceptional collections modulo a natural equivalence relation. We will give more details in a sequel to this paper.
\end{rem}

\begin{rem}
If $Y$ is a del Pezzo surface, we can show the following weaker result: every exceptional bundle $F$ on $Y$ is obtained by the construction of Theorem~\ref{mainthmintro}. The proof uses the classification of exceptional bundles on del Pezzo surfaces \cite{KO95}.
\end{rem}

\section{Background}\label{background}

\subsection{Reflexive sheaves}\label{reflexive}
Let $X$ be a normal variety. For $E$ a coherent sheaf on $X$ we write $E^{\vee}:=\cHom(E,\cO_X)$ for the dual of $E$.
We say $E$ is \emph{reflexive} if the natural map $E \rightarrow E^{\vee\vee}$ is an isomorphism.
Equivalently, $E$ is reflexive if $E$ is torsion-free and for any inclusion $i \colon U \subset X$ of an open subset with complement of codimension at least $2$ we have $i_*(E|_U)=E$.
For a coherent sheaf $E$ we call $E^{\vee\vee}$ the \emph{reflexive hull} of $E$.

Let $X$ be a normal projective variety of dimension $d$. The first Chern class defines a map
$$c_1 \colon \Pic X \rightarrow H^2(X,\bZ).$$
It is an isomorphism if $H^1(\cO_X)=H^2(\cO_X)=0$ (by the exponential sequence).
Let $\Cl X$ denote the \emph{class group} of reflexive rank $1$ sheaves on $X$ modulo isomorphism.
Equivalently, $\Cl X$ is the group of Weil divisors on $X$ modulo linear equivalence. We write $\cO_X(D)$ for the sheaf associated to a Weil divisor $D$. Then $\Pic X \subset \Cl X$ and we have the map
$$c_1 \colon \Cl X \rightarrow H_{2d-2}(X,\bZ), \quad \cO_X(D) \mapsto [D].$$
which is compatible with $c_1 \colon \Pic X \rightarrow H^2(X,\bZ)$ via
$$H^2(X,\bZ) \rightarrow H_{2d-2}(X,\bZ), \quad \alpha \mapsto [X] \cap \alpha.$$

Finally suppose $X$ is a normal projective surface with quotient singularities and $H^1(\cO_X)=H^2(\cO_X)=0$.
Let $X^o \subset X$ denote the smooth locus.
Then
$$\Pic X = H^2(X,\bZ) \subset \Cl X = H^2(X^o,\bZ)=H_2(X,\bZ),$$
see \cite{K05}, Proposition~38.

\subsection{Toric geometry}
We use various constructions from toric geometry which we review briefly here. We refer to \cite{F93} for the basic definitions of toric geometry.
\subsubsection{Weighted projective space}\label{wps}
The \emph{weighted projective space} $\bP(w_0,\ldots,w_r)$ is defined as the quotient
$$\bP(w_0,\ldots,w_r):= (\bC^{r+1} \setminus \{0\})/\bC^{\times}$$
where the action is given by
$$\bC^{\times} \ni \lambda \colon (x_0,\ldots,x_r) \mapsto (\lambda^{w_0}x_0,\ldots,\lambda^{w_r}x_r).$$
Then $\bP=\bP(w_0,\ldots,w_r)$ is a normal projective variety of dimension $r$.
We may assume that any $r$ of the $w_i$ have no common factors.
We have weighted homogeneous coordinates $X_0,\ldots,X_r$. The variety $\bP$ is covered by affine patches
$$(X_i \neq 0) = \bC^r / \textstyle{\frac{1}{w_i}}(w_0,\ldots,\hat{w_i},\ldots,w_r).$$

The variety $\bP$ is the toric variety associated to the free abelian group $N=\bZ^{r+1}/\bZ\cdot(w_0,\ldots,w_r)$ and the fan $\Sigma$ in $N \otimes \bR$
of cones generated by proper subsets of the standard basis of $\bZ^{r+1}$.

The variety $\bP$ carries a rank $1$ reflexive sheaf $\cO_{\bP}(1)$ such that the global sections of its $n$th reflexive tensor power
$\cO_{\bP}(n):=(\cO_{\bP}(1)^{\otimes n})^{\vee\vee}$ are the weighted homogeneous polynomials of degree $n$.
The class group $\Cl(\bP)$ of rank $1$ reflexive sheaves modulo isomorphism is isomorphic to $\bZ$, generated by $\cO_{\bP}(1)$.
The canonical sheaf $\omega_{\bP}$ (the pushforward of the canonical line bundle from the smooth locus) is isomorphic to $\cO_{\bP}(-\sum w_i)$.
The first Chern class $c_1 \colon \Cl(\bP) \rightarrow H_{2r-2}(\bP,\bZ)$ is an isomorphism.
Write $A:=c_1(\cO_{\bP}(1))$ for the positive generator of $H_{2r-2}(\bP,\bZ)$. Then the intersection product is given by $A^r=1/(w_0 \cdots w_r)$.
\subsubsection{Weighted blowups}\label{wtdblowup}
Consider the cyclic quotient
$$X= \bC^d_{x_1,\ldots,x_d} /\textstyle{\frac{1}{r}}(a_1,\ldots,a_d).$$
The variety $X$ is the affine toric variety associated to the free abelian group
$$N=\bZ^d+\bZ\cdot\textstyle{\frac{1}{r}}(a_1,\ldots,a_d)$$
and the cone $\sigma \subset N \otimes \bR$ generated by the standard basis of $\bZ^d$.
(That is, writing $M=\Hom(N,\bZ)$ and $\sigma^* \subset M \otimes \bR$ for the dual cone, the semigroup ring
$\bC[\sigma^* \cap M]$ is the invariant ring for the action of $\bZ/r\bZ$ on $\bC[x_1,\ldots,x_d]$.)

Let $w=\frac{1}{r}(w_1,\ldots,w_d) \in N$ be a primitive vector contained in the interior of the cone $\sigma$.
Let $\tilde{\Sigma}$ be the fan with support $\sigma$ obtained by adding the ray $\bR_{\ge 0}\cdot w$ and subdividing $\sigma$ into the cones spanned by $w$ and the codimension $1$ faces of $\sigma$. Then the fan $\tilde{\Sigma}$ determines a proper birational toric morphism
$$\pi \colon \tilde{X} \rightarrow X$$
called the \emph{weighted blowup} of $P \in X$ with weights $\frac{1}{r}(w_1,\ldots,w_d)$ with respect to the orbifold coordinates $x_1,\ldots,x_d$.
The morphism $\pi$ restricts to an isomorphism over $X \setminus \{P\}$, and the exceptional locus $E=\pi^{-1}(P)$ is a quotient of the weighted projective space $\bP(w_1,\ldots,w_d)$ by the action of a finite abelian group.
The toric variety $\tilde{X}$ is covered by affine charts $U_1,\ldots,U_d$ corresponding to the maximal cones of $\Sigma$.

Assume for simplicity that $w$ generates $N/\bZ^d$. Then $E=\bP(w_1,\ldots,w_d)$, and the restriction of $\pi$ to the chart $U_1$ is given by
$$ \bC^d_{u,x'_2,\ldots,x'_d} / \textstyle{\frac{1}{w_1}}(-r,w_2,\ldots,w_d) 
\rightarrow  \bC^d_{x_1,\ldots,x_d} / \textstyle{\frac{1}{r}}(a_1,\ldots,a_d),$$
$$(u,x_2',\ldots,x_d') \mapsto (x_1,\ldots,x_d)=(u^{w_1/r},u^{w_2/r}x_2',\ldots,u^{w_d/r}x_d').$$
The other charts are described similarly.

\end{document}